\titleformat{\section}[hang]{\normalfont\bfseries\filcenter}{\thesection.}{0.5em}{}
\titlespacing{\section}{0pt}{18pt}{5pt} 
\def\thm@space@setup{%
  \thm@preskip=8pt  
  \thm@postskip=8pt 
}
\theoremstyle{plain}
\newtheorem{theorem}{Theorem}[section]
\newtheorem*{theorem*}{Theorem}
\newtheorem{lemma}[theorem]{Lemma}
\newtheorem{corollary}[theorem]{Corollary}
\newtheorem{conjecture}[theorem]{Conjecture}
\newtheorem{Important Facts}[theorem]{Important Facts}
\theoremstyle{definition}
\newtheorem{definition}[theorem]{Definition}
\newtheorem{remark}[theorem]{Remark}
\newtheorem*{acknowledgements}{Acknowledgements}
\def\int{\text{interior}}
\def\reals {\hbox {\rm {R \kern -2.8ex I}\kern 1.15ex}}
\def\integers {\hbox {\rm { Z \kern -2.8ex Z}\kern 1.15ex}}
\def\naturals {\hbox {\rm {N \kern -2.8ex I}\kern 1.20ex}}
\def\rationals {\hbox {\rm { Q \kern -2.2ex l}\kern 1.15ex}}
\def\hyp {\hbox {\rm {H \kern -2.7ex I}\kern 1.25ex}}
\newcommand{\NN}{\mathcal{N}}
\newcommand{\ssm}{\smallsetminus}
\newcommand{\bea}{\begin{eqnarray*}}
\newcommand{\eea}{\end{eqnarray*}}
\long\def\@savemarbox#1#2{\global\setbox#1\vtop{\hsize\marginparwidth 
  \@parboxrestore\tiny\raggedright #2}}
\def\strutdepth{\dp\strutbox}
\def \ss{\strut\vadjust{\kern-\strutdepth \sss}}
\def \sss{\vtop to \strutdepth{
\baselineskip\strutdepth\vss\llap{$\diamondsuit\;\;$}\null}}
\def\strutdepth{\dp\strutbox}
\def \sst{\strut\vadjust{\kern-\strutdepth \ssss}}
\def \ssss{\vtop to \strutdepth{
\baselineskip\strutdepth\vss\llap{$\spadesuit\;\;$}\null}}
\def\strutdepth{\dp\strutbox}
\def \ssh{\strut\vadjust{\kern-\strutdepth \sssh}}
\def \sssh{\vtop to \strutdepth{
\baselineskip\strutdepth\vss\llap{$\heartsuit\;\;$}\null}}
\begin{document}

\title[] {Bridge distance and plat projections}

\author{Jesse Johnson}

\author{Yoav Moriah}


\date{\today}

\subjclass{Primary 57M}
\keywords{Heegaard splittings, bridge sphere, plats, bridge distance, train tracts }

\address{Department of Mathematics \\
Oklahoma State University \\
Stillwater, OK 74078}
\email{jjohnson@math.okstate.edu}

\address{Department of Mathematics \\
Technion \\
Haifa, 32000 Israel}
\email{ymoriah@tx.technion.ac.il}

\begin{abstract} 
We calculate the bridge distance for  $m$-bridge knots/links in the $3$-sphere with  sufficiently complicated $2m$-plat projections. In particular we show that if the underlying braid of the plat has $n - 1$ rows of twists and all its exponents have absolute value greater than or equal to three then the distance of the bridge sphere 
is exactly $\lceil n/(2(m - 2)) \rceil$, where $\lceil x \rceil$ is the smallest integer greater than or equal to $x$.  As a corollary, we conclude that  if such a diagram has more than $4m(m-2)$ rows then the bridge sphere  defining the plat projection is the unique minimal bridge sphere for the knot.

\end{abstract}

\maketitle

\section{Introduction}

Let $K \subset S^3$ be an $m$-bridge link (possibly with one component) in an $m$-bridge position with respect to a bridge sphere $\Sigma$. Then $K \subset S^3$ has a plat projection as indicated in Figure \ref{plat} below.  Each box is marked by  $a_{i,j} \in \mathbb{Z}$  denoting  $a_{i,j}$ half twists,  where $1 \leq i \leq n - 1$ and $1 \leq j \leq m$ when $i$ is even, and  $1 \leq j \leq m - 1$ when $i$ is odd. The coefficients $a_{i,j}$ will be defined more precisely in Section \ref{plats}. We refer to $n$ as the {\it height} of the plat and to $m$ as the {\it width} of the plat.

\begin{definition}
\label{tightly twisted} 
A $2m$-plat will be called {\it highly twisted} if $|\, a_{i,j} |\, \geq 3$ for all $i, j$. Similarly, a link with a highly twisted plat projection will be called a {\it highly twisted link}.
\end{definition}

Associated with every bridge sphere $\Sigma$ for a link (or knot)  $K \subset S^3$ is the {\it bridge sphere distance} denoted by $d_{\Sigma}(K)$, as defined in~\cite{BS-bridges} and in Section~\ref{distance} below. Let $\lceil x \rceil$ be the ceiling function, which is equal to the smallest integer greater than or equal to $x$. We prove the following:

\begin{theorem}
\label{main theorem}
If $K \subset S^3$ is an $m$-bridge link with a highly twisted $n$-row, $2m$-plat projection for $m \geq 3$ then the distance $d(\Sigma)$ of the induced bridge surface $\Sigma$ is exactly $d(\Sigma) =  \lceil n /(2(m - 2))  \rceil$.
\end{theorem}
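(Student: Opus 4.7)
The plan is to prove the claimed equality by establishing matching upper and lower bounds on $d(\Sigma)$. Recall that $d(\Sigma)$ is the distance in the curve complex $\CC(\Sigma)$ of the $2m$-punctured sphere between the disk sets $\DD_+$ and $\DD_-$, i.e.\ the sets of essential simple closed curves that bound compressing disks, disjoint from $K$, on the two sides of $\Sigma$. So I must exhibit a path of length $\lceil n/(2(m-2)) \rceil$ in $\CC(\Sigma)$ between $\DD_+$ and $\DD_-$, and then show that no shorter path can exist.

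For the upper bound, I would read off curves directly from the plat structure. At each horizontal level between two consecutive rows of twist boxes, the link meets a $2$-sphere transversely in $2m$ points, and any curve separating the punctures into two groups of complete bridge-arc endpoints determines an element of $\CC(\Sigma)$. At the top of the plat such a curve bounds a compressing disk in the upper ball and hence lies in $\DD_+$, and symmetrically at the bottom it lies in $\DD_-$. Because the width is $m \geq 3$, within any block of $2(m-2)$ consecutive twist rows one can select a pair of strands not moved into conflict with the separating curve chosen above the block, so that a corresponding curve chosen below the block is disjoint from the one above. Concatenating such steps should yield a path in $\CC(\Sigma)$ of length exactly $\lceil n/(2(m-2)) \rceil$.

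For the lower bound, which I expect to be the main obstacle, I would set up train tracks on $\Sigma$ adapted to the twist structure of the plat. To each twist box one attaches a standard twist train track, and the hypothesis $|a_{i,j}| \geq 3$ forces any essential curve disjoint from a given ``carried'' curve to be carried by a modified train track in a controlled way. The key splitting lemma I would aim for is the following: if $c \in \CC(\Sigma)$ is carried by a train track that absorbs the top $k$ rows of twists of the plat, then any $c'$ disjoint from $c$ is carried by a train track that absorbs at most $k + 2(m-2)$ rows. Iterating along a geodesic from $\DD_+$ to $\DD_-$ in $\CC(\Sigma)$ would then force its length to be at least $\lceil n/(2(m-2)) \rceil$.

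The principal difficulty is precisely this inductive splitting lemma: passing from a curve to a disjoint one should ``consume'' at most $2(m-2)$ rows of twisting at a time, and the argument must apply uniformly to \emph{all} choices of a disjoint curve, not just those arising obviously from the plat. The highly twisted hypothesis will be used to control the splitting moves through each box, while the width hypothesis $m \geq 3$ enters because each twist box mixes only two of the $m$ strands, leaving $m-2$ free pairs through which a disjoint curve can travel before meeting a fully twisted region. Once the splitting lemma is in hand, matching it with the explicit upper-bound path completes the proof.
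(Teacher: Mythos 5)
Your overall strategy is the same as the paper's (explicit level curves for the upper bound, twist-adapted train tracks for the lower bound), and the upper-bound sketch does match what Lemma~\ref{big step} and Lemma~\ref{upper bounds on d} carry out. But the lower bound, which you correctly identify as the heart of the matter, is left as an unproved ``splitting lemma,'' and as you have stated it there is a genuine logical gap: disjointness from a curve $c$ that is merely \emph{carried} by a train track does not force a disjoint curve $c'$ to be carried by any related track. A curve can be carried while running over only a small piece of the track, and then curves disjoint from it can be wildly transverse to the track. What makes the transfer work in the paper is the notion of \emph{covering}: only when the carried curve meets every fiber of the track does Lemma~\ref{coversdisjointcarried} let you isotope any disjoint curve to be almost carried. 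The quantity $2(m-2)$ arises exactly from the bookkeeping needed to achieve covering: by Lemma~\ref{up the ladder}, projecting down one row adds (roughly) one tao, or a tao and an eyelet, to the portion of the track covered, and it takes $2(m-2)$ rows to go from bisecting one tao to covering all $m-1$ taos plus an eyelet, i.e.\ the whole track. Your proposal never articulates this covering condition, so the inductive step ``disjoint curve consumes at most $2(m-2)$ more rows'' has no mechanism behind it.

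Two further steps that your iteration silently assumes are also nontrivial and are handled by separate arguments in the paper. First, the initial curve $\partial D$ for an upper disk is not carried by any of the tracks at the outset; it is only \emph{transverse}, and Lemmas~\ref{diskcoversone} and~\ref{diskcoversi} are needed to show that after projecting through $2(m-2)$ highly twisted rows it covers the full track, which is what starts the induction in Corollary~\ref{< r almost carried}. Second, iterating the splitting step by itself yields no contradiction: you must also know that a curve bounding a disk \emph{below} $P_1$ can never be almost carried by the bottom track $\tau_1$ (Lemma~\ref{lower disk}, proved by an outermost-arc argument showing such a boundary cannot enter the track through a switch). Without that terminal incompatibility, ``the geodesic has absorbed all $n$ rows'' is not an obstruction, and no lower bound follows. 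So the proposal is the right roadmap, but the three ingredients above --- the covering requirement, the transverse-to-covering analysis of the upper disk boundary, and the non-carried-ness of lower disk boundaries --- are exactly the missing content, not routine details.
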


\begin{figure}[ht]
\includegraphics[width=3.5in]{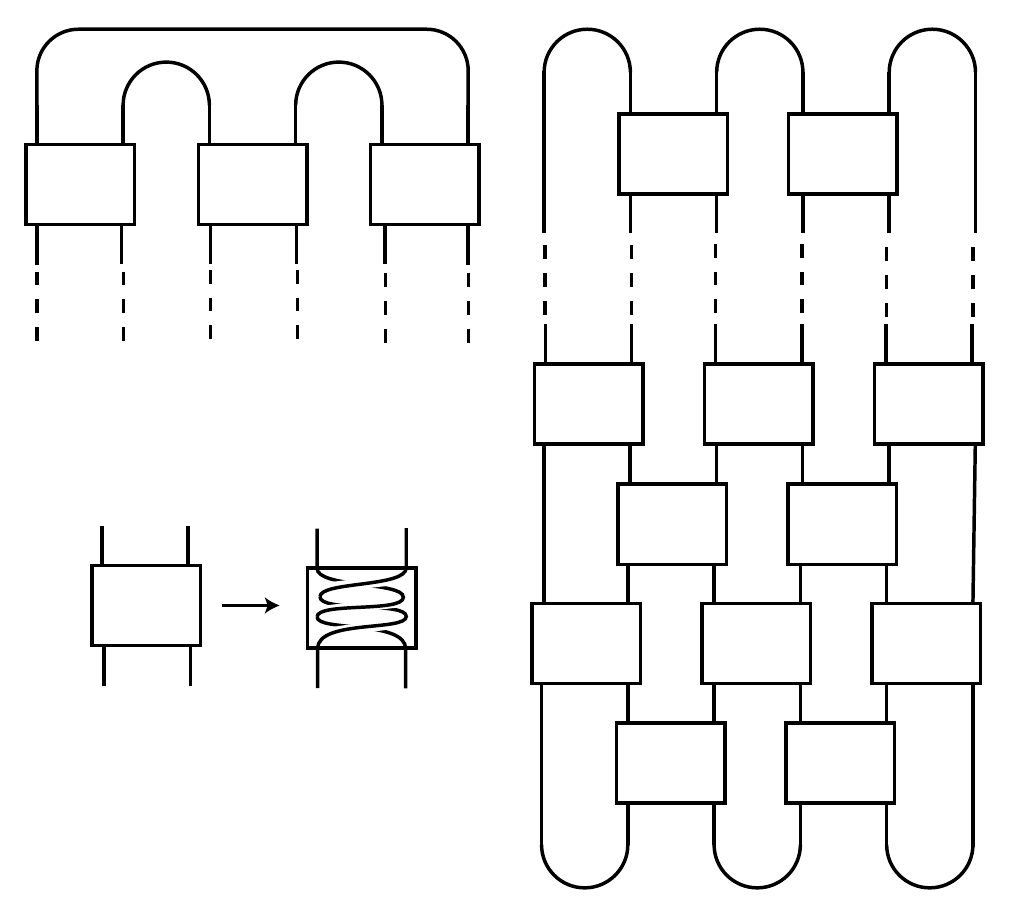}
\put(-220,72){3}
\put(-93,33){$a_{1,1}$}
\put(-50,33){$a_{1,2}$}
\put(-115,64){$a_{2,1}$}
\put(-71,64){$a_{2,2}$}
\put(-28,64){$a_{2,3}$}
\put(-93,94){$a_{3,1}$}
\put(-50,94){$a_{3,2}$}
\put(-115,124){$a_{4,1}$}
\put(-71,124){$a_{4,2}$}
\put(-28,124){$_{4,3}$}
\put(-96,186){$_{n-1,1}$}
\put(-53,186){$_{n-1,2}$}
\put(-245,180){$_{n-1,1}$}
\put(-201,180){$_{n-1,2}$}
\put(-158,180){$_{n-1,3}$}
\caption{A plat projection of a $3$-bridge knot}
\label{plat}
\end{figure}

The bridge sphere distance of a knot in $S^3$ is a measure of the ``complexity''  of the gluing map between the boundary spheres of the rational tangles above and below the bridge sphere. One would expect intuitively that a braid with a more ``complicated''  diagram would induce a more complicated  gluing map. However, there were no previously known ways to determine distance from a knot projection. Results on distances of  Heegaard splittings in the case of closed surfaces or bridge surfaces for knot spaces typically split into two kinds. One  presents either lower or upper bounds, for example,  Evans~\cite{Evans}, Blair-Tomova-Yoshizawa~\cite{BTY}, Tao Li~\cite{Li-distance},  Ichiahra-Saito~\cite{Ich-Sa-bridges} and Lustig-Moriah~\cite{LuMo}. The other kind presents for given integers $n$ a manifold/knot space with distance 
$n$, for example, Ido-Jang-Kobayashi ~\cite{Ido-Jang-Kobayashi} and Qiu-Zou-Guo~\cite{Qiu-Zou-Guo}.

In contrast, note  that Theorem~\ref{main theorem} determines the bridge surface distance precisely for bridge surface defined by a highly twisted plat projection. This is particularly interesting in the context of the following Theorem of Maggy Tomova:

\begin{theorem*} [Tomova~\cite{Tomova}] 
If $K \subset S^3$ is an $m$-bridge knot with respect to a minimal bridge sphere 
$\Sigma$ such that $d(\Sigma) > 2m$ then $\Sigma$ is the unique minimal bridge sphere.
\end{theorem*}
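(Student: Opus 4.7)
The plan is a proof by contradiction. I would assume $\Sigma'$ is another minimal bridge sphere for $K$ that is not isotopic to $\Sigma$, and isotope $\Sigma'$ so that it is transverse to $K$ and to $\Sigma$, with the number of intersection components $\Sigma \cap \Sigma' \subset S^3 \ssm K$ as small as possible. Standard innermost-disk and outermost-arc arguments applied to both surfaces then allow me to reduce to the case in which every component of $\Sigma \cap \Sigma'$ is essential on both $\Sigma \ssm K$ and $\Sigma' \ssm K$, since otherwise the intersection could be reduced further.

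If $\Sigma \cap \Sigma' = \emptyset$, then $\Sigma'$ sits entirely in one of the two trivial tangles bounded by $\Sigma$. But a bridge sphere inside a trivial tangle must be $\partial$-parallel there, which would force the bridge number of $K$ to drop below $m$, contradicting the minimality of $\Sigma$. So I may assume $\Sigma \cap \Sigma' \neq \emptyset$.

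Next, I would cut $\Sigma'$ along $\Sigma \cap \Sigma'$ into planar pieces $P_1, \ldots, P_k$, each contained in one of the two trivial tangles bounded by $\Sigma$. Each $P_i$ is either $\partial$-parallel in its tangle or admits a compressing disk or bridge disk in that tangle; in the latter case, pushing the disk slightly off $\Sigma$ produces an essential curve on $\Sigma \ssm K$ that bounds a disk in one of the two sides. Since adjacent pieces $P_i, P_{i+1}$ share a boundary curve, the essential curves they produce are disjoint on $\Sigma \ssm K$, hence within distance one in the curve complex $\CC(\Sigma \ssm K)$. Chaining these gives a path in $\CC(\Sigma \ssm K)$ from the disk set of one tangle to the disk set of the other, whose length is controlled linearly by $-\chi(\Sigma' \ssm K) = 2m - 2$. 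A careful count should yield $d(\Sigma) \leq 2m$, contradicting $d(\Sigma) > 2m$.

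The hard part will be the final step: turning the intersection combinatorics into an honest curve-complex path with the correct numerical bound. I expect to need to distinguish compressing disks from bridge disks, handle $\partial$-parallel pieces with care, and treat the delicate situation in which a piece lying in one tangle could give rise to a compressing disk for the other side of $\Sigma$. This is precisely where the Rubinstein-Scharlemann graphic technology, together with Hartshorn-Bachman-Schleimer-style Euler characteristic accounting, becomes indispensable; Tomova's own argument is a careful refinement of exactly this machinery, and matching her constant of $2m$ rather than something like $4m$ is the delicate part of the bookkeeping.
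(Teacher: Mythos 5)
This statement is not proved in the paper at all: it is quoted from Tomova's work and used as a black box (the authors only apply it, together with their main theorem, to conclude that a highly twisted plat with $n > 4m(m-2)$ has a unique minimal bridge sphere). So there is no proof in the paper to compare yours against; what follows is an assessment of your sketch on its own terms.

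Your outline points at the right circle of ideas---the quoted statement is a special case of Tomova's bound $d(P) \leq 2 - \chi(Q \ssm K)$ for inequivalent bridge surfaces $P$ and $Q$, which for a second $2m$-punctured bridge sphere gives $d(\Sigma) \leq 2m$---but as written it has a genuine gap exactly where you flag one. The step ``cut $\Sigma'$ along $\Sigma \cap \Sigma'$ and chain the resulting curves'' does not work from a single minimal-intersection position: a planar piece $P_i$ lying in one of the two balls bounded by $\Sigma$ may yield, after compressions, a curve bounding a disk on one side of $\Sigma$, but nothing in your setup forces the curves contributed by consecutive pieces to alternate sides of $\Sigma$, to be essential in $\Sigma \ssm K$, or to begin in $\DD(V_K)$ and end in $\DD(W_K)$. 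Producing such an alternating chain is the entire content of the theorem, and it comes not from an innermost-disk analysis of one intersection pattern but from a two-parameter sweep-out (the Rubinstein--Scharlemann graphic), in which one tracks how $\Sigma'$ meets the level spheres of a sweep-out by parallel copies of $\Sigma$ and extracts one essential disk-bounding curve per labeled region, with the number of steps bounded by $2 - \chi(\Sigma' \ssm K) = 2m$. Your proposal names this machinery but defers to it rather than executing it; likewise your disposal of the disjoint case (asserting that a bridge sphere contained in a trivial tangle forces the bridge number to drop) is asserted rather than argued, and is one of the places where Tomova must instead conclude that the two surfaces are equivalent or related by stabilization. As a roadmap to Tomova's paper the sketch is reasonable; as a proof it is incomplete at its decisive step.
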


By Theorem~\ref{main theorem}, a highly twisted knot or link $K \subset S^3$ with $n > 4m(m - 2)$ will have distance $d(\Sigma) > 2m$, so this will define the unique minimal bridge sphere by Tomova's Theorem.

\vskip10pt

It is a result of Schubert~\cite{Schubert} that $2$-bridge knots $K \subset S^3$ are classified by the number $\alpha/\beta \in \mathbb{Q}$ corresponding to the four-strand braid that defines them.  Furthermore, any $4$-plat projection corresponds to a continued fraction expansion of $\alpha/\beta$ and any two such 
projections are equivalent by flype moves (see e.g., Bleiler-Moriah~\cite{BleMo}). Note that flype moves generate twist  boxes with coefficients in $\{- 1, 0, 1 \}$, so these projections are not highly twisted. This  raises a natural question about the possibility of such a classification for  general $m$-bridge knots.

\begin{conjecture}
\label{sphere determines knot} 
If $K \subset S^3$ has a highly twisted $2m$-plat projection of height $n$ such that $n > 4m(m - 2)$ then $K$ has a unique such highly twisted plat projection.
\end{conjecture}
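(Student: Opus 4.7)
The plan is to combine the uniqueness of the minimal bridge sphere --- provided by Theorem~\ref{main theorem} together with Tomova's theorem --- with a rigidity argument for the plat structure on that sphere.

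First, suppose $K$ has two highly twisted plat projections $P, P'$ of widths $2m, 2m'$ and heights $n, n' > 4m(m-2)$ respectively, and let $\Sigma, \Sigma'$ be the induced bridge spheres. Theorem~\ref{main theorem} gives $d(\Sigma) > 2m$ and $d(\Sigma') > 2m'$, so by Tomova's theorem each is the unique minimal bridge sphere for $K$. This forces $m = m'$ and $\Sigma = \Sigma'$ up to ambient isotopy of $(S^3, K)$. After applying such an isotopy, both $P$ and $P'$ present $K$ in bridge position with respect to the same sphere $\Sigma$, and the problem reduces to showing that the remaining plat data coincide.

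Next, I would extract from each plat projection a combinatorial skeleton living on or near $\Sigma$: an ordering of the $2m$ puncture points $K \cap \Sigma$, a system of bridge disks in each of the two trivial tangles $T_{\pm}$ bounded by $\Sigma$, and a sequence of horizontal level surfaces in $T_{\pm}$ whose intersections with $K$ record the twist coefficients. The height $n$ and the coefficients $a_{i,j}$ can be recovered from how these surfaces intersect $K$ and how they sit relative to the bridge disks, so uniqueness of the plat projection is equivalent to uniqueness of this skeleton up to the obvious top--bottom and mirror symmetries of the diagram.

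The heart of the argument, and the main obstacle, is the rigidity of this skeleton. The natural approach is to mimic the train-track machinery used in the proof of Theorem~\ref{main theorem}: the horizontal level surfaces associated with $P$ and $P'$ intersect in arcs carried by the train tracks coming from the respective twist boxes, and the condition $|a_{i,j}|\geq 3$ should force any essential discrepancy between the two plat structures to inflate the distance estimate of Theorem~\ref{main theorem}. Since both projections yield the same distance $d(\Sigma) = \lceil n/(2(m-2))\rceil$, no such discrepancy can exist, so the level surfaces and bridge disk systems of $P$ and $P'$ must be isotopic, and from this $n = n'$ and $a_{i,j} = \pm a'_{i,j}$. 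The delicate step is this quantitative rigidity --- essentially a normal-form theorem for highly twisted plats on a fixed minimal bridge sphere --- which requires a careful analysis of how two different plat train tracks on the same $\Sigma$ can interact.
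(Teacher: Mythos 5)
There is a fundamental mismatch here: the statement you are asked about is Conjecture~\ref{sphere determines knot}, which the paper explicitly leaves open --- it has no proof in the paper, and your proposal does not supply one. Your first step (both plat projections induce high-distance bridge spheres, hence by Theorem~\ref{main theorem} and Tomova's theorem there is a unique minimal bridge sphere, so after isotopy both plats live on the same $\Sigma$) is essentially the observation the authors themselves make right after quoting Tomova's theorem, and even it needs care: Tomova's statement as quoted applies to a \emph{knot} with respect to a \emph{minimal} bridge sphere, so you must first argue that each plat sphere realizes the bridge number (and handle links), and if $m \neq m'$ you cannot apply the quoted statement symmetrically to both spheres. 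But this is the easy part and is not the content of the conjecture.

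The genuine gap is the step you yourself flag as ``delicate'': the rigidity of the plat data on the fixed sphere $\Sigma$. That step \emph{is} the conjecture --- uniqueness of the bridge sphere says nothing about uniqueness of the braid/twist-box structure presenting $K$ with respect to it (compare the $2$-bridge case, where one bridge sphere supports many $4$-plat diagrams related by flypes). Moreover, the specific mechanism you propose cannot work: you argue that any discrepancy between the two plat structures would ``inflate the distance estimate,'' and that since both give $d(\Sigma)=\lceil n/(2(m-2))\rceil$ the data must agree, concluding $n=n'$ and $a_{i,j}=\pm a'_{i,j}$. Bridge distance is far too coarse for this. The equality $\lceil n/(2(m-2))\rceil=\lceil n'/(2(m-2))\rceil$ determines $n$ only up to an interval of length $2(m-2)$, so $n=n'$ does not follow; and the distance carries no information whatsoever about the individual coefficients $a_{i,j}$ (plats with different highly twisted coefficients of the same height have identical distance). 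So the conclusion of your third paragraph does not follow from anything established in the paper, and no amount of rerunning the train-track estimates on a single sphere produces the needed normal-form theorem. As it stands, your argument reduces the conjecture to itself.
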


\begin{remark}
\label{classification} 
If  Conjecture \ref{sphere determines knot} is true then Theorem \ref{main theorem} gives a ``normal'' form for highly twisted knots/links and hence is a classification theorem for such links. Clearly not all knots are tightly  twisted but Theorem \ref{main theorem} is a significant step towards a classification.
\end{remark}

The train track argument in this paper is reminiscent of that of Lustig Moriah in~\cite{LuMo} and is organized as follows: In Section~\ref{distance}, we define bridge distance in detail, then in Section~\ref{plats} we give a careful description of plats. We prove the upper bound on the distance of a bridge surface defined by a plat presentation in Section~\ref{upper bound}. 

The majority of the paper is devoted to proving the lower bound. We begin in Section \ref{YYTT} by constructing a collection of ordered train tracks $\{\tau_i\}$ in the bridge surface $\Sigma$. In a sequence of lemmas in Section~\ref{carrying}, the relationships between successive train tracks of this form are described. This determines the intersection pattern between loops carried and almost carried by these train tracks. We then apply these ideas to study pairs of disjoint loops in Section~\ref{disjoint}.

In Section \ref{top and bottom waves} we determine how curves bounding disks in $\mathcal{B}^-$ and $\mathcal{B}^+$ (the complements of the bridge sphere in 
$S^3 \ssm \NN(K)$) are carried by the train tracks. We combine these results to prove Theorem~\ref{main theorem} at the end of this Section.

\vskip10pt

\begin{acknowledgements} 
We thank the Technion, where most of the work was done, for its hospitality. The first author was supported by NSF grant DMS-1308767.
\end{acknowledgements}

\section{Bridge Distance} 
\label{distance}

In this section we present some basic definitions, lemmas and  notions needed for the rest of the paper.

\begin{definition}
\label{curve complex} 
Let $\Sigma_{g, p}$ be a surface of genus $g$ with $p$ punctures. A simple closed curve  $\gamma \subset \Sigma_{g, p}$ is {\it inessential} in $\Sigma_{g, p}$ if it bounds either a disk or a once punctured disk in $\Sigma_{g, p}$. A simple close curve in $\Sigma_{g, p}$ is {\it essential} if it is not inessential. The curve complex  $\mathcal{C}(\Sigma)$ is a simplicial complex defined as follows:
\end{definition}

Let $[\gamma]$ denote the isotopy class of an essential simple closed curve  $\gamma \subset \Sigma$.
\begin{enumerate} 

\item The set of verices of $\mathcal{C}(\Sigma)$ is $\mathcal{V}(\Sigma) = \{ [\gamma]\, | \, \gamma \subset \Sigma$ is essential $\}$.

\vskip5pt

\item An $n$ simplex is an $(n +1)$-tuple $\{[\gamma_0], \dots, [\gamma_n]\}$ of vertices that have pairwise disjoint curve representatives. 

\end{enumerate}

\begin{definition}
\label{bridge distance} 
Suppose $K \subset M$ is a knot in a closed, orientable irreducible $3$-manifold. Let $\Sigma \subset M$ be a sphere decomposing $M$ into balls $V$ and $W$ and assume that $\Sigma$ is transverse to $K$. We will say that $\Sigma$ is a {\it bridge surface} for $K$ if each of the intersections $K \cap V$ and $K \cap W$ is a collection of boundary parallel arcs in $V$ and $W$, respectively. 

Given a bridge surface $\Sigma$ for $K$, define $\Sigma_K = \Sigma \ssm K$, $V_K = V \ssm K$ and $W_K = W \ssm K$. Let $\mathcal{D}(V_K)$ (resp. $\mathcal{D}(W_K)$) be the set of all essential simple closed curves in $\Sigma_K$ that bound disks in $V_K$ (resp. $W_K$).  Define the {\it (bridge) distance} of $\Sigma$ to be $d(\Sigma_K) = d(\mathcal{D}(V_K), \mathcal{D}(W_K))$ measured in $\mathcal{C}(\Sigma_K)$.\end{definition}

\section{Plats}
\label{plats}

In this section, we give a precise definition of plats. While the definition may seem unnecessarily technical, it will prove to be convenient for our purposes. The reader should note that the definition is consistent with the image of Figure \ref{plat}. 

Consider a sweep-out 
$f : S^3 \rightarrow [-\infty,\infty]$ with one index-zero critical point $c_0$ and one index-three critical point $c_3$.  Let $\alpha$ be an arc with endpoints $c_0$ and $c_3$ such that the restriction of $f$ to $\alpha$ is monotonic, and the complement $S^3 \ssm \alpha$ is an open ball $B$. Identify $B$ with $\mathbf{R}^3$, with coordinates $(x,y,z)$ so that each level surface $f^{-1}(t)$ is the plane given by $y = t$.

We will picture the $x$-axis as pointing to the right, the $y$-axis as being vertical and the $z$-axis as pointing towards the viewer. Then the level surfaces $f^{-1}(t)$ appear as horizontal planes.

For each value $y$ and each integer $k$, define $c_{y,k}$ to be the circle in the plane $P_y$ with radius $\frac{1}{2}$, centered at the point $x = k + \frac{1}{2}$, $z = 0$. The {\it plat tube} $A_{i,j}$ is the union of the circles $\{c_{y,2j}\ |\ y \in [i,i+1]\}$ when $i$ is even and the union $\{c_{y,2j+1}\ |\ y \in [i,i+1]\}$ when $i$ is odd. So, the plat tube $A_{i,j}$ is a vertical annulus whose projection onto the $xy$-plane is the square $[k,k + 1] \times [i,i+1]$ where $k = 2j$ for even $i$ and $k = 2j+1$ for odd $i$.

For a pair of integers $\{n, m\}$, the {\it $(n, m)$-plat structure} is the union of the plat tubes $A_{i,j}$ where $i$ ranges from 1 to $n-1$ and $j$ ranges from 1 to either $m$ when $i$ is even, or $m - 1$, when $i$ is odd. Note that the positions of the plat  tubes along the $x$-axis alternate for each row.  Also note that the number of rows is $n - 1$, rather than $n$. This convention will prove more convenient later on.

\begin{definition}
An {\it $(n,m)$-plat braid} is a union of $2m$ pairwise disjoint arcs in $\mathbf{R}^3$ with endpoints in the planes $P_1$ and $P_n$, consisting of arcs contained in an $(n, m)$-plat structure and vertical arcs outside the plat structure. We require that the intersection of the arcs with each plat tube is exactly two properly embedded arcs with endpoints in the plane $z = 0$, and whose projections to the $y$-axis are monotonic. 
\end{definition}

For each plat tube $A_{i,j}$, if we isotope the two arcs within $A_{i,j}$ to intersect the plane $z = 0$ in a minimal number of components, then they will have the same number of components for each of the two arcs in a given plat tube. Note that each arc intersects the plane $z = 0$ at its endpoints, so the number of components of intersection will be 1 if and only if the arc is entirely contained in this plane.

The {\it twist number} $a_{i,j}$ will be the absolute value of this number of components minus one. (If each arc is vertical, i.e.\ contained in the plane $z=0$ then $a_{i,j} = 0$.) The sign of the twist number will be determined as follows: The direction of the $y$-axis defines an orientation on each arc. The projection of each arc into $P_i$ thus defines an orientation on the circle $c_{i,j}$. If this orientation is counter-clockwise, then $a_{i,j}$ will be positive. Otherwise, $a_{i,j}$ will be negative. This translates to the usual ``right hand rule'' for twist regions.

There is a canonical way to construct a link from an $(n , m)$-plat braid: Note that the points $(j, 1, 0)$ for $1 \leq j \leq 2m$ are end points of the plat braid. For  $n$ even there is a unique (up to isotopy) arc in the half of the plane $z = 0$  below the line defined by $y = 1$ connecting the point  $(2j - 1, 1, 0)$ to $(2j, 1, 0)$ for each $1 \leq j \leq m$. When $n$ is even, there are similar arcs for the end points in the half of the plane $z = 0$  above the line defined by $y = n$, and we can choose these arcs to be pairwise disjoint.

When $n$ is odd, we will define the upper arcs by a slightly different construction. In this case, we will attach an arc in the plane $z=0$ from the endpoint $(1, n, 0)$ to $(2m,n,0)$, then for each $2 \leq j \leq m - 1$, we'll add an arc from $(2j - 2 , n, 0)$ to $(2j - 1, n, 0)$. In either case, the union of the plat braid with these upper arcs and the same lower arcs will be called a {\it link in a plat projection} or a {\it plat link}.

The closure in $S^3$ of each plane $P_y$ is a sphere in $S^3$ whose complement is a pair of open balls. By construction, for $1 \leq y \leq n$, $L$ intersects each of these balls in a collection of boundary parallel arcs. Therefore, each $P_y$ for these values of $y$ defines a bridge surface for $L$.

For each integer $i \in [1, n]$ there is a Euclidean projection map $\sigma_i :\mathbf{R}^3 \to P_i$ that sends each point $(x,y,z)$ to $(x,i,z)$. This map sends the plat braid to a union of overlapping circles. For integers $i,j$, $\sigma_i(P_j)$ sends the points of $K \cap P_j$ to the points of $K \cap P_i$, but otherwise the map $\sigma_i$ will not be well behaved relative to the braid. 

In addition to $\sigma$, we will define a second type of projection $\pi$ that will take the braid into account:

Note that a plat braid intersects each $P_y$ in the same number of points and these points vary continuously as $y$ varies from $1$ to $n$.  This can be thought of as an isotopy of these $2m$ points in $P_1$ which extends to an ambient isotopy of $P_1$. Intuitively speaking, the points twist round each other in a manner determined by  the braid, as in the standard analogy between a braid and a path in a configuration space of points.  To be precise, there is a projection  map $\pi_y : \mathbf{R} \times [0,n] \times \mathbf{R} \rightarrow P_y$ for each $y \in [1,n]$ that sends each arc component of the plat braid to a point $(j,y,0)$ and defines a homeomorphism $P_{y'} \rightarrow P_y$ for each $y' \in [0,n]$. These homeomorphisms are canonical up to isotopy fixing the points $K \cap P_y$, and the induced homeomorphism $P_y \rightarrow P_y$ is the identity.

\begin{remark}
\label{The bridge sphere is P1} 
The bridge sphere defined as the closure of $P_1$ will serve as the canonical bridge sphere for the plat link $L$  and will be denoted by $\Sigma$. Any loop $\ell$ in a bridge sphere defined by a different plane $P_{y'}$ can be thought of as a loop in $\Sigma$ by considering its image $\pi_1(\ell)$. 
\end{remark}

\section{Upper bound}
\label{upper bound}

In this Section, we prove the following Lemma, which gives an upper bound on the bridge distance of a tightly twisted knot/link.

\begin{lemma}
\label{upper bounds on d} 
Let $K \subset S^3$ be knot or link with a highly twisted $n$-row, $2m$-plat projection and $\Sigma$ the induced bridge surface. Then $d(\Sigma) \leq  \lceil n /(2(m - 2))  \rceil$.
\end{lemma}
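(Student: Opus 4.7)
The plan is to prove the bound by constructing an explicit path in $\mathcal{C}(\Sigma_K)$, of length at most $L = \lceil n/(2(m-2)) \rceil$, connecting a curve in $\mathcal{D}(V_K)$ to a curve in $\mathcal{D}(W_K)$.

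First I would fix heights $1 = y_0 < y_1 < \cdots < y_L = n$ with each gap $y_k - y_{k-1}$ at most $2(m-2)$. At each height $P_{y_k}$ I pick a simple closed curve $\delta_k$ enclosing a chosen interval $I_k$ of adjacent positions, and set $\gamma_k = \pi_1(\delta_k) \subset \Sigma$. For the endpoints, take $\delta_0$ to encircle the bottom-cap pair $\{1,2\}$ in $P_1$, so that $\gamma_0 = \delta_0 \in \mathcal{D}(V_K)$ bounds a bridge disk in the lower ball. Take $\delta_L$ to encircle a top-cap pair in $P_n$; then the annulus in $W$ swept by pushing $\delta_L$ down to $P_1$ through the braid (via the isotopy underlying $\pi_1$) caps off with the bridge disk above $P_n$ to produce a disk in $W_K$ bounded by $\gamma_L$, so $\gamma_L \in \mathcal{D}(W_K)$.

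The core step is showing that the intermediate $\delta_k$ can be chosen so that $\gamma_{k-1}$ and $\gamma_k$ admit disjoint representatives in $\Sigma_K$, placing them at distance at most one in $\mathcal{C}(\Sigma_K)$. Two essential simple closed curves on a punctured sphere have disjoint representatives if and only if one side of one is contained in a side of the other. Using $\pi_1|_{P_{y_k}}$ to identify the punctures in $P_{y_k}$ with those in $P_1$, this translates into a nesting condition on the strand subsets occupying the chosen intervals at their respective heights. Since each row of the plat is a disjoint union of half-twists on adjacent position pairs, a strand moves by at most one position per row; hence over a single $2(m-2)$-row window every strand shifts by at most $2(m-2)$ positions. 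By choosing each $I_k$ as an interval shifted or enlarged from $I_{k-1}$ by at most $2(m-2)$, these migrations are absorbed and the nesting is preserved.

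The main obstacle I anticipate is the combinatorial bookkeeping: producing an explicit sequence $I_0, \ldots, I_L$ that starts at a bottom-cap pair, ends at a top-cap pair (with a small case split on the parity of $n$, since this determines the top-cap pattern), and stays nested across every window. The choice $2(m-2)$ is exactly the available slack: the plat has width $2m$, an essential curve on the $2m$-punctured sphere requires at least two punctures on each side, and each window needs enough margin to absorb strand migration without pushing the enclosed interval off the ends or collapsing it to a trivial curve. Once such a sequence is constructed, the nested strand subsets yield disjoint $\gamma_{k-1}$ and $\gamma_k$ in $\Sigma_K$, and the resulting path $\gamma_0, \gamma_1, \ldots, \gamma_L$ realizes the claimed upper bound on $d(\Sigma)$.
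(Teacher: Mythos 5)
Your overall scaffolding --- a sequence of curves at heights spaced at most $2(m-2)$ apart, pulled back to $\Sigma$ by $\pi_1$, with the two end curves bounding disks below $P_1$ and above $P_n$ --- matches the paper's construction, and your treatment of the two endpoint curves is fine. The gap is in the core step. You reduce disjointness of $\gamma_{k-1}$ and $\gamma_k$ to a nesting condition on the sets of punctures enclosed by the intervals $I_{k-1}, I_k$, tracked through strand migration. That reduction is false: since the maps $\pi_y$ are homeomorphisms, disjointness of $\gamma_{k-1}$ and $\gamma_k$ in $\Sigma_K$ is equivalent to disjointness, in the punctured plane $P_{y_{k-1}}$, of the round circle $\delta_{k-1}$ and the \emph{dragged} curve $\pi_{y_{k-1}}(\delta_k)$, and the isotopy class of the dragged curve is not determined by which punctures it encloses. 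Whenever an enclosed strand shares a twist region with a non-enclosed strand, the dragged curve wraps around that twist region; with $|a_{i,j}| \geq 3$ such a wrap forces essential intersections with any round circle that the twist region straddles, even when the puncture sets are nested or disjoint. Concretely, for $m = 3$ take $\delta_k$ a round circle about two adjacent punctures and drag it down through one highly twisted row whose boxes pair each of these two strands with an outside strand: the image encloses a puncture set contained in one side of the round circle $\delta_{k-1}$ about the original pair, yet it separates the two punctures inside $\delta_{k-1}$ and hence meets it essentially --- this is exactly the ``covering a tao'' phenomenon of Lemma~\ref{carried and covers} and Figure~\ref{taospin}. So strand migration alone cannot certify disjointness; one must control the entire support of the dragged curve (which twist boxes it has been pulled across), and that is precisely the content of the paper's Lemma~\ref{big step}.

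Relatedly, your bookkeeping scheme cannot close even at the level of puncture sets: with only $2m$ punctures you cannot shift or enlarge an interval by up to $2(m-2)$ positions at each of $\lceil n/(2(m-2)) \rceil$ successive steps and remain nested. The paper's path instead alternates between the extreme left and the extreme right loops: starting from one extreme, the support of the dragged curve spreads horizontally by roughly one twist box per row (the boxes in consecutive rows being offset), so after at most $2(m-2)$ rows it still cannot reach a round circle at the opposite extreme; this gives one step of the path per block of $2(m-2)$ rows and resets the construction for the next block, with one final short step to a cap curve above $P_n$ when $2(m-2)$ does not divide $n$. With your disjointness mechanism replaced by this support argument and the alternating left--right choice of curves, your outline becomes the paper's proof.
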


Given a plat link $L$ as above, we will construct a set of ``canonical'' loops and show that this set contains a path of the desired length.

For each odd value of $i$ such that $1 \leq i \leq n$, and each value $1 \leq j \leq m-1$, let $\ell_{i,j}$ be the circle in the plane $P_i$ with radius $\frac{3}{4}$, centered at $(2j + \frac{3}{2}, i, 0)$. Note that this circle bounds a disk $E_{i,j}$ in $P_i$ containing the circle $c_{i,j}$ defined above, as well as the two points of $L \cap P_i$ that are in $c_{i,j}$. These loops are shown in the odd rows of Figure~\ref{loops}(a).

Similarly, for each even value of $i$ such that $1 \leq i \leq n - 1$, and each value $1 \leq j \leq m$, let $\ell_{i,j}$ be the circle in the plane $P_i$ with radius $\frac{3}{4}$, centered at $(2j + \frac{1}{2}, i, 0)$. Again, this circle bounds a disk $E_{i,j}$ in $P_i$ containing the circle $c_{i,j}$ and the two points of $L \cap P_i$ that are in $c_{i,j}$. These loops are shown in the even rows of Figure~\ref{loops}(a).

For $1 \leq j \leq m$, define $\ell_{0, j}$ to be the circle in $P_1$ of radius $\frac{3}{4}$ centered at $(2j - \frac{1}{2}, 1, 0)$. Note that these loops intersect the loops $\ell_{1,j}$, while the rest of the loops defined so far are pairwise disjoint. (We could modify the construction to make them disjoint, but as the reader will find below, this wouldn't be worth the trouble.) Regardless, by construction, loop $\ell_{0,j}$ is the boundary of a disk $D^-_1$ in $R^3$ disjoint from $L$ whose interior is contained in the half space below $P_1$, i.e.\ a compressing disk for the bridge surface defined by $P_1$, as in Figure~\ref{loops}(a).

Similarly, for the final row $i = n$, the loops $\ell_{n,j}$ will bound compressing disks with interiors above $P_n$.
\begin{figure}[ht]
  \includegraphics[width=4in]{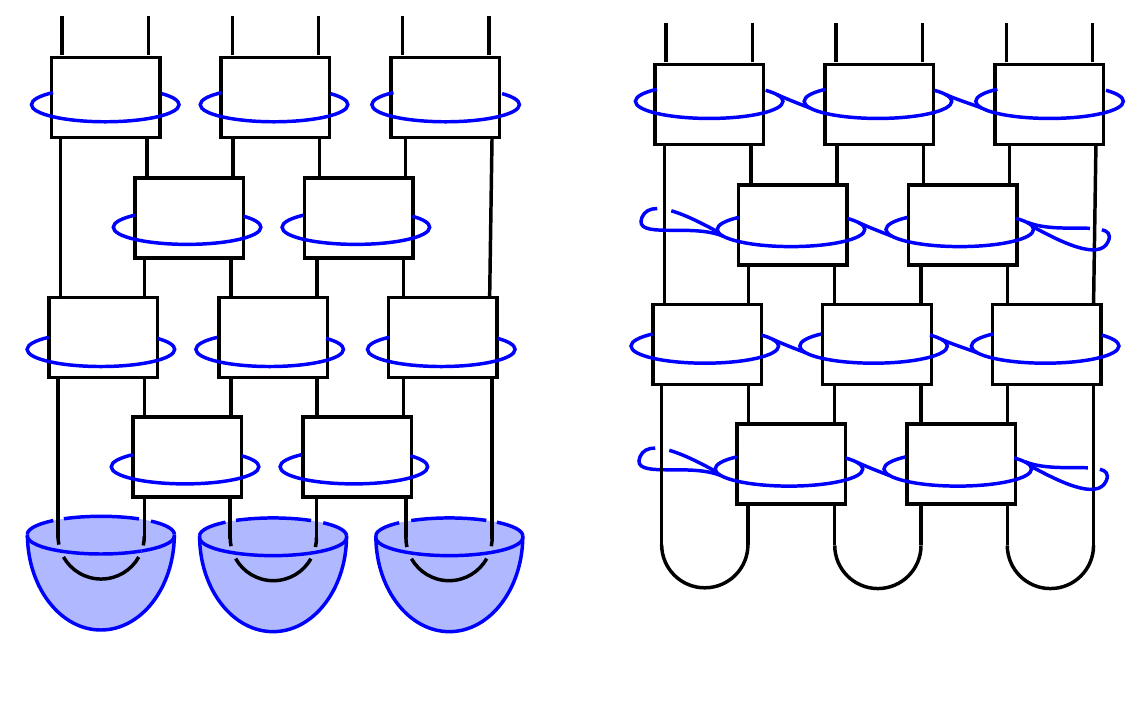}
  \put(-250,63){$\ell_{1,1}$}
  \put(-208,63){$\ell_{1,2}$}
  \put(-272,93){$\ell_{2,1}$}
  \put(-230,93){$\ell_{2,2}$}
  \put(-186,93){$\ell_{2,3}$}
  \put(-250,123){$\ell_{3,1}$}
  \put(-208,123){$\ell_{3,2}$}
  \put(-272,153){$\ell_{4,1}$}
  \put(-230,153){$\ell_{4,2}$}
  \put(-186,153){$\ell_{4,3}$}
  \put(-290,12){$D^-_1$}
  \put(-246,12){$D^-_2$}
  \put(-203,12){$D^-_3$}
  \put(-226,0){(a)}
  \put(-74,0){(b)}
  \caption{The loops $\ell_{i,j}$ and train track diagrams $\tau_i$}
  \label{loops}
\end{figure}
 
\begin{lemma}
\label{big step}
For every even value $i < n$ and every integer $k \leq 2(m-2)$ such that $i + k \leq n$, $\pi_1(\ell_{i,m})$ is disjoint from the left-most loop $\pi_1(\ell_{i+k, 1})$ and $\pi_1(\ell_{i,1})$ is disjoint from the right-most loop $\pi_1(\ell_{i+k, a})$ in row $i+k$, where $a = m$ when $i + k$ is even and $a = m-1$ when $i + k$ is odd.
\end{lemma}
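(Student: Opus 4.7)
The plan is to reduce the claim to a combinatorial statement about strand positions in the plat braid, and then invoke the standard fact that in the $2m$-punctured sphere $\Sigma = \Sigma_{0, 2m}$, two essential simple closed curves each bounding a twice-punctured disk with disjoint pairs of enclosed punctures can be isotoped to be disjoint. Since $\pi_1 : P_y \to \Sigma$ is a homeomorphism sending each strand of the plat braid to a single point of $K \cap \Sigma$, the loop $\pi_1(\ell_{i, m})$ bounds a twice-punctured disk whose two punctures correspond to the strands through the rightmost plat tube $A_{i, m}$; analogously $\pi_1(\ell_{i+k, 1})$ bounds a disk whose two punctures correspond to the strands through the leftmost plat tube $A_{i+k, 1}$. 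It therefore suffices to show that these two pairs of strands are disjoint as subsets of $\{1, \ldots, 2m\}$.

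The key step is a position-tracking estimate. Label the $2m$ strands by their $x$-coordinate at row $1$ and let $p_s(r)$ denote the $x$-coordinate of strand $s$ at row $r$. Because each twist box permutes two adjacent strand positions and all strands outside a twist box are vertical, $|p_s(r+1) - p_s(r)| \leq 1$ for every strand $s$ and every integer row $r$. Iterating gives $|p_s(i+k) - p_s(i)| \leq k$. The two strands through $A_{i, m}$ occupy the rightmost positions $2m-1$ and $2m$ of row $i$, so $p_s(i+k) \geq 2m - 1 - k$ for either such strand. The leftmost pair $A_{i+k, 1}$ sits at positions $\{1, 2\}$ when $i+k$ is even and $\{2, 3\}$ when $i+k$ is odd. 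Since $i$ is even, $i+k$ has the parity of $k$: if $k$ is even then $k \leq 2(m-2)$ yields $p_s(i+k) \geq 3 > 2$, while if $k$ is odd then parity forces $k \leq 2(m-2) - 1$, giving $p_s(i+k) \geq 4 > 3$. Either way $s$ does not pass through $A_{i+k, 1}$, so the two pairs of strands are disjoint. An analogous estimate, bounding $p_s(i+k) \leq 2 + k$ for a strand $s$ through $A_{i, 1}$, yields the second claim for $\ell_{i, 1}$ and $\ell_{i+k, a}$.

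Finally, an essential simple closed curve in $\Sigma_{0, 2m}$ bounding a twice-punctured disk is determined up to isotopy by the pair of punctures it encloses, and any two isotopy classes with disjoint pairs admit disjoint representatives (for instance small loops around the two pairs). Combined with the position estimate, this establishes disjointness in $\Sigma$. The only technical care needed is the parity bookkeeping in the position estimate, and the hypothesis $k \leq 2(m-2)$ is the precise threshold at which a rightmost pair at row $i$ can just barely fail to reach the leftmost pair at row $i+k$.
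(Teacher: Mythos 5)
Your strand-position bookkeeping (each strand moves at most one position per row, so for $k \leq 2(m-2)$ the two pairs of enclosed punctures are disjoint, with the parity case analysis) is fine as far as it goes, but the topological step you reduce everything to is false, and that step is where the real content of the lemma lies. In a sphere with $2m \geq 6$ punctures, an essential simple closed curve bounding a twice-punctured disk is \emph{not} determined up to isotopy by the pair of punctures it encloses, and two curves enclosing disjoint pairs of punctures need not admit disjoint representatives. For instance, let $a$ be a round circle about punctures $1,2$, let $c$ be a round circle about punctures $2,3$, and let $b$ be the image of a round circle $b_0$ about punctures $3,4$ under the Dehn twist along $c$; then $b$ still bounds a twice-punctured disk containing exactly punctures $3,4$, yet $i(a,b) \geq i(c,b_0)\,i(c,a) - i(a,b_0) = 4 > 0$, so $a$ and $b$ cannot be isotoped off each other. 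The loops in this lemma are precisely of this dragged type: $\pi_1(\ell_{i+k,1})$ is a round circle pushed through many rows of twisting, so recording only which two points of $K \cap \Sigma$ it encloses discards exactly the data that determines its isotopy class, and your closing remark that one may take ``small loops around the two pairs'' replaces the actual loops $\pi_1(\ell_{i,m})$ and $\pi_1(\ell_{i+k,1})$ by different isotopy classes, which is not what the lemma (or the distance computation that uses it) requires.

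What is needed instead is control of the whole support of the dragged curve, not merely of its two punctures. Since $\pi_1$ restricted to $P_i$ is a homeomorphism of punctured spheres, it suffices to show that the round circle $\ell_{i,m}$ is disjoint from $\pi_i(\ell_{i+k,1})$ inside $P_i$. Pushing $\ell_{i+k,1}$ down one row drags it only around those twist boxes of that row which share a strand with its current support, so the support spreads rightward by at most one strand position per row; after $k \leq 2(m-2)$ rows it involves only strands in positions at most $2m-2$ (by the same parity count you already carried out), and hence stays off the twice-punctured disk about the rightmost pair $2m-1, 2m$ bounded by $\ell_{i,m}$. The symmetric claim for $\ell_{i,1}$ and $\ell_{i+k,a}$ is identical. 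This support-tracking argument is what the authors' one-line proof (``immediate from the definitions of the $\pi_i$'') is pointing at, and it is the same idea the rest of the paper formalizes with train tracks; your proposal as written omits it.
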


\begin{proof} The proof is an immediate consequence of the definitions of the functions $\pi_i$.
\end{proof}

\begin{proof}[Proof of Lemma~\ref{upper bounds on d}]
Let $r$ be the largest integer such that $r(2(m - 2)) \leq n$. By repeatedly applying Lemma~\ref{big step}, we see that $\ell_{0,1}$ is distance at most $r$ from either $\ell_{2r(m-2),m}$ (when $r$ is odd) or $\ell_{2r(m-2),1}$ (when $r$ is even). If $r(2(m - 2)) = n$ then both loops $\ell_{2r(m-2),m}$ and $\ell_{2r(m-2),1}$ bound disks above $\Sigma$, so $d(\Sigma) \leq r = n/(2(m-2)) = \lceil n/(2(m-2)) \rceil$.

Otherwise, since $n - 2r(m-2) < 2(m-2)$, Lemma~\ref{big step} implies that each of $\pi_1(\ell_{2r(m-2), 1})$ and $\pi_1(\ell_{2r(m-2), m})$ is disjoint from some loop $\pi_1(\ell_{n,a})$ in row $n$. By construction each of these loops bounds a disk above $P_n$. Thus $d(\Sigma) \leq r + 1$. Since $r(2(m-2)) < n < (r+1)(2(m-2))$, we have $\lceil n/(2(m-2)) \rceil = r+1 \geq d(\Sigma)$, completing the proof.
\end{proof}

\begin{remark}\label{n- bounds}  
In particular, this implies that when $n < 2(m-2)$, the bridge distance of $\Sigma$ is at most one. Thus from now on, unless specifically noted otherwise we will assume that $n \geq 2(m - 2)$.
\end{remark}

\section{Taos and Train tracks}
\label{YYTT}

In this section, we define train tracks. For reasons that will become clear below, our definition is slightly different from the usual one.

\begin{definition}\label{train track} 
A {\it train track} $\tau$ is a compact subsurface of $\Sigma$ with a singular fibration by intervals: The interior of $\tau$ is fibered by open intervals and the fibration extends to a fibration of the surface with boundary by properly embedded closed intervals except for finitely many intervals called {\it singular fibers}. Each singular fiber $\alpha$ has a neighborhood in $\tau$ homeomorphic to $([0,1]\times[0,1]) \ssm((\frac{1}{4}, \frac{3}{4}) \times (\frac{1}{2}, 1))$ such that $\alpha$ is the horizontal interval $[0,1] \times \{\frac{1}{2}\}$ and the adjacent fibers are also horizontal, as in Figure~\ref{ttlocal}.
\end{definition} 

Given a train track $\tau$, the projection of every fiber to a point results in a graph $T$ called the {\it train track diagram} for $\tau$. This graph has a natural embedding in the surface $\Sigma$, though not in the train track $\tau$. Each singular fiber in the train track determines a trivalent vertex of $T$, called a \textit{switch}. The interval bundle structure on the original train track determines a vector at each switch to which each adjacent edge is tangent. Thus a train track diagram is a graph such that the three edges adjacent to each given vertex are tangent to the same vector, though they will approach the vertex from both sides along that vector, as on the right in Figure~\ref{ttlocal}. 
\begin{figure}[ht]
\includegraphics[width= 4.5in]{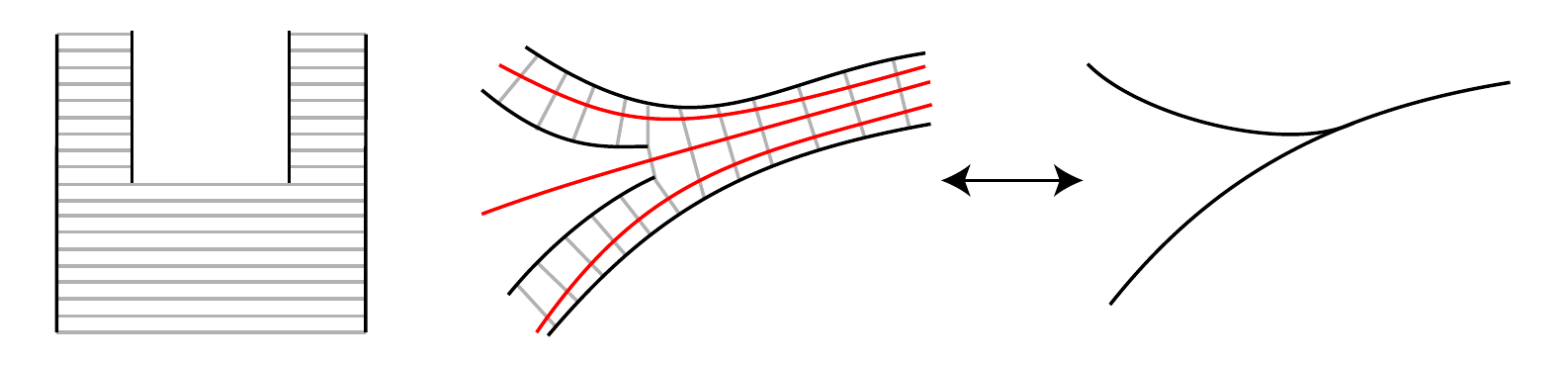}
\caption{Converting between a train track and a train track diagram}
\label{ttlocal}
\end{figure}

Conversely, given a graph $T$ in $\Sigma$ that satisfies this property concerning tangent vectors at vertices, one can construct a train track $\tau$ with train track diagram $T$. Note that we are restricting our attention to train tracks whose diagrams have three-valent vertices, though this is not necessary in general.

\vskip10pt

In Section~\ref{upper bound}, we constructed a collection of ``canonical'' loops that contained a path giving an upper bound on the distance of $\Sigma$. To find the lower bound, we will show that, roughly speaking, any other path must follow very close to one of these canonical paths. To do this, we will extend the canonical loops to a sequence of train tracks  $\tau_i$ defined by train track graphs $T_i$ that encode the structure of the plat link $K$.

In each disk $E_{i,j} \subset P_i$ bounded by the loop $\ell_{i,j}$, there is a unique (up to isotopy) properly embedded arc $\alpha$ that separates the two punctures. We can isotope this arc to intersect the plane $z=0$ in a single point. We can further isotope this arc in a neighborhood of its boundary so that it is tangent to $\ell_{i,j}$, and there will be two choices for how to do this. 

To differentiate the two, recall that each $P_y$ is oriented, which defines an orientation on each loop $\ell_{i,j}$. For the figures below, we will draw each $\ell_{i,j}$ so that the induced orientation is counter-clockwise.

Consider a tangent vector at one of the endpoints of $\alpha$, pointing out of the arc. When we make $\alpha$ tangent to $\ell_{i,j}$, this tangent vector will point in a direction that will either agree with the orientation on $\ell_{i,j}$ or disagree. If we isotope $\ell$ so that both tangent vectors agree with the orientation on $\ell_{i,j}$ then we will say that the image of this isotopy is a {\it right handed tao arc}. Otherwise, if both tangent vectors disagree, we will call it a {\it left handed tao arc}.

\begin{definition}\label{The train tracks}
The union of each loop $\ell_{i,j}$ and a (left handed/right handed) tao arc will be called a {\it (left handed/right handed) tao diagram}, as in Figure~\ref{yinyangs}. (The name alludes to the Taoist ``yin-yang'' symbol, which the figure resembles.)
\end{definition}

\begin{figure}[ht]
\includegraphics[width= 2in]{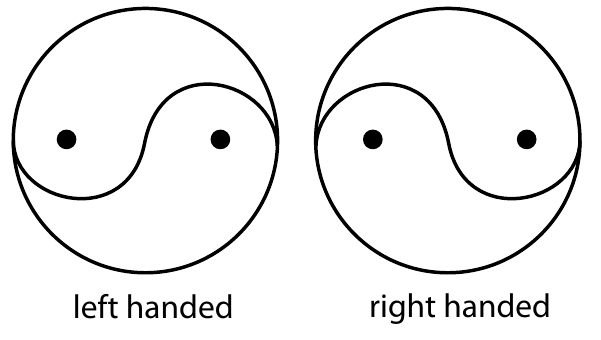}
\caption{Left handed and right handed tao diagrams.}
\label{yinyangs}
\end{figure}

For each $\ell_{i,j}$, let $Y_{i,j}$ be a tao diagram, such that $Y_{i,j}$ is right handed when $a_{i,j}$ is positive and left handed otherwise. (In Theorem \ref{main theorem}, we assume that $|a_{i,j}| \geq 3$, so it won't matter which handedness we pick in the case when $a_{i,j} = 0$.) 

For each $i$ and the appropriate values of $j$, there will be an arc of the line $\{(x,i,0)\}$ connecting $Y_{i,j}$ to $Y_{i,j+1}$. As with the tao arcs, we can isotope the ends of this arc, within $P_i$, to make it tangent to $\ell_{i,j}$ and $\ell_{i,j+1}$. We will say that the resulting arc is {\it compatible} with the adjacent taos if the direction of a tangent vector pointing into the arc agrees with orientation defined by the tao arc on each loop. Moreover, we will isotope each endpoint of the connecting arc in the direction of an out-pointing tangent vector along $\ell_{i,j}$ past exactly one endpoint of the tao arc.

For each even value of $i$, we will define $T_i$ to be the union of the tao diagrams $\{Y_{i,j}\}$ and a collection of compatible arcs. Some of the possible diagrams for $m=3$ (depending on the signs of the coefficients $a_{i,j}$) are shown on the left of Figure~\ref{ttdiagrams}. The way these train tracks sit with respect to the plat is shown in Figure~\ref{loops}(b).

\begin{figure}[ht]
\includegraphics[width= 4in]{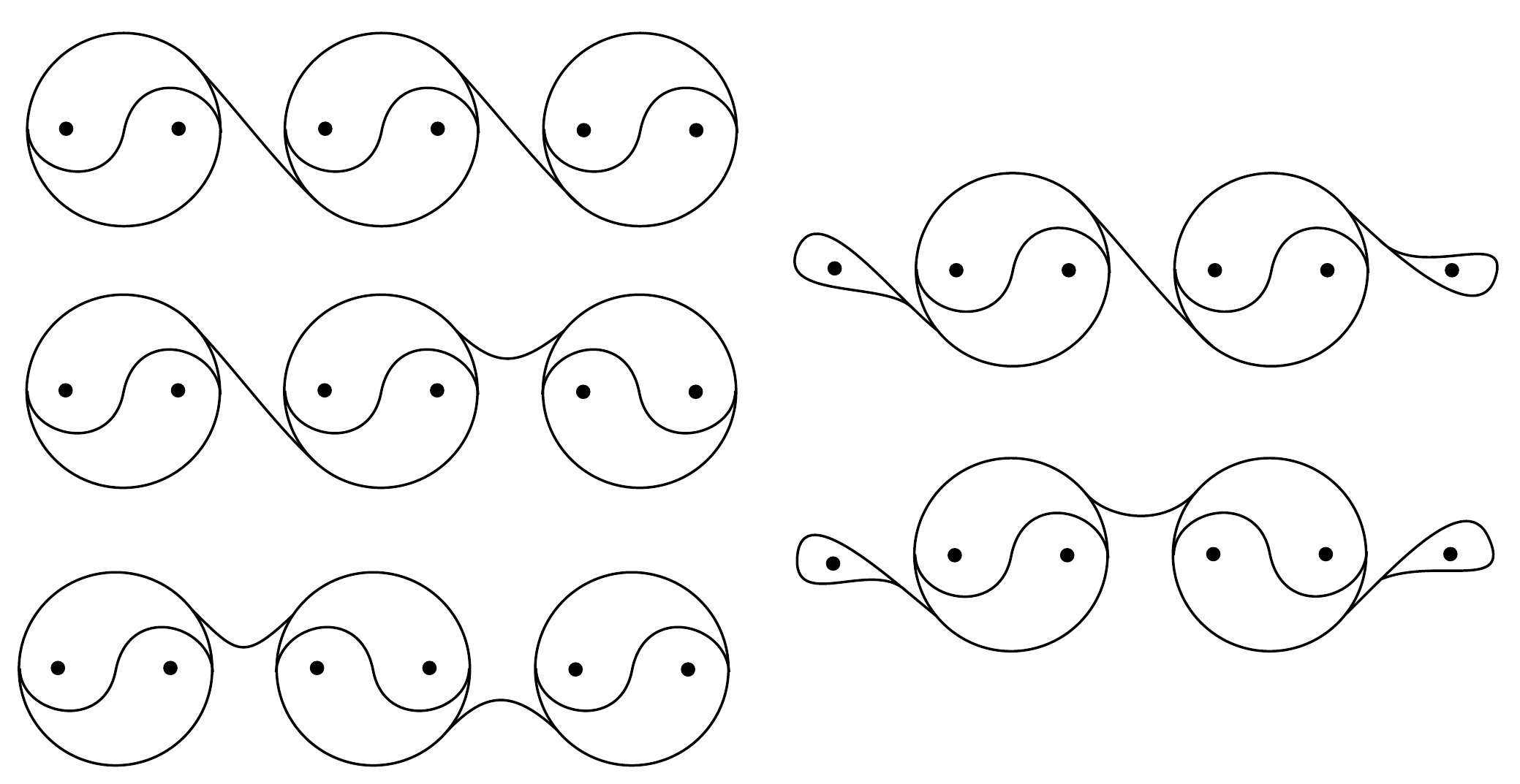}
\caption{The train track diagrams for the $\tau_i$, as in Definition \ref{The train tracks}. The diagrams on the left are for $i$ even 
and those on the right are for $i$ odd.}
\label{ttdiagrams}
\end{figure}

For odd values of $i$, there are two points of $\ell \cap P_i$ that are not contained within any of the tao diagrams, and we will need to make an extra consideration for these. For each such value of $i$, there is an arc in $P_i$ with endpoints in $Y_{i,1}$ that intersects the line $z = 0$ in a single point and bounds a disk containing the point $(1,i,0)$ (but not the point $(2m,i,0)$). Isotope the endpoints of this arc to the same point in $Y_{i,1}$, then make them tangent to $\ell_{i,1}$ so that each of the two vectors pointing into the arc agree with the orientation of $\ell_{i,1}$. Finally, pinch the ends of the two arcs together, so that there is a single arc from $Y_{i,1}$ to a switch that splits the two ends of the original arc, as on the right part of Figure~\ref{ttdiagrams}. The resulting switch and two arcs will be called an {\it eyelet}, and we will include a second eyelet around the point $(2m, i, 0)$. 

For each $i$, define $\tau_i$ to be the train track defined by the train track diagram $T_i$. Note that $\tau_i$ is connected and is made up of train tracks induced by the tao diagrams, as in Figure~\ref{yinyangstt}, connected in pairs by bands. We will call each train track constructed in this way a {\it plat track}. These are shown relative to the entire plat in Figure~\ref{loops}.

\begin{figure}[ht]
\includegraphics[width= 3.5in]{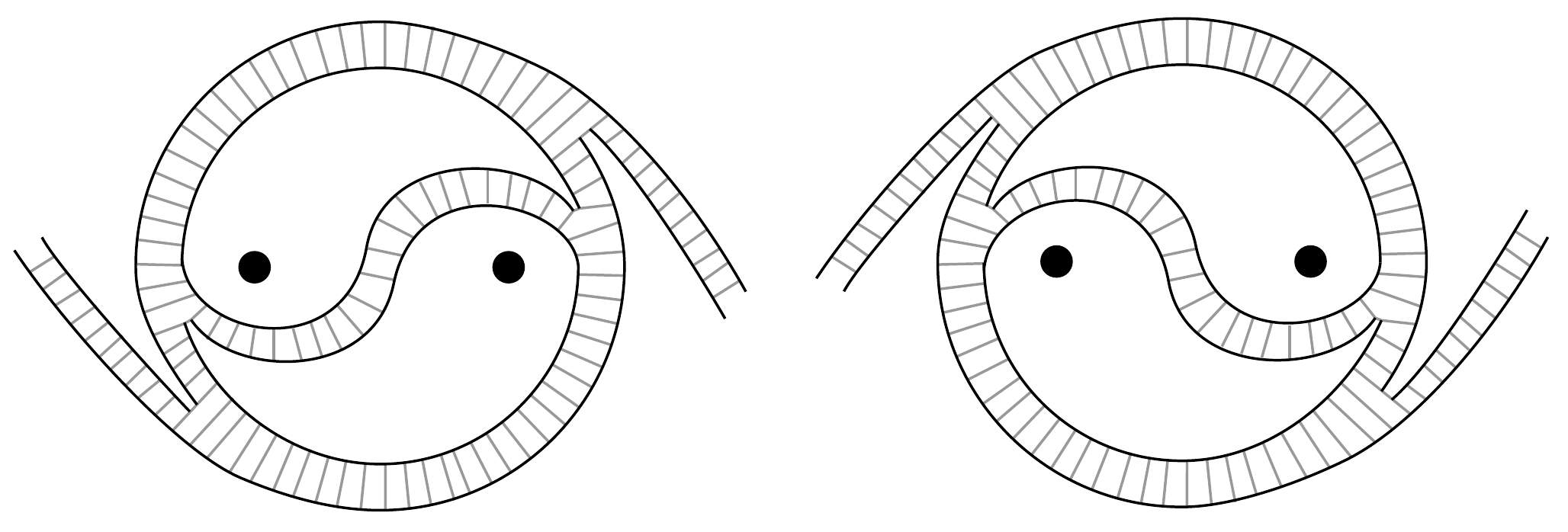}
\caption{Left handed and right handed taos.}
\label{yinyangstt}
\end{figure}

\section{Carrying and covering}
\label{carrying}

A train track $\tau$ is said to {\it carry} a simple closed curve $\ell$ if $\ell$ is contained in (the subsurface) $\tau$ and $\ell$ is transverse to each fiber in $\tau$ that it meets. Moreover, $\tau$ carries a given isotopy class of loops if it carries at least one loop in the isotopy class. 

In this paper, we will need a slightly more general verion of the notion of carrying. We will say that a properly embedded arc $\gamma \subset \tau$ is {\it carried} by $\tau$ if its interior is transverse to the interval fibers of $\tau$ and each endpoint is in the intersection of the interior of a singular fiber with the boundary of $\tau$. 

\begin{definition}
A train track {\it almost carries} a loop $\ell$ if 
\begin{enumerate}
\item $\ell$ is disjoint from the endpoints of the interval fibers in $\tau$,
\vskip4pt
\item its intersection with $\tau$ is carried by $\tau$,
\vskip4pt 
\item no arc of $\ell$ in the complement of $\tau$ is parallel into the interior of a singular fiber and
\vskip4pt
\item no arc of $\ell$ in the complement of $\tau$ is parallel into an arc of fiber endpoints.
\end{enumerate}

For the plat tracks $\tau_{i,j}$ with $i$ odd, we need to amend this definition slightly: We will also say that a loop $\ell$ is {\it almost carried} by a plat track $\tau_i$ if it is almost carried by the train track defined by the track diagram that results from removing exactly one of the two eyelets from $T_i$. 

\end{definition}

The types of arcs ruled out by conditions (3) and (4) are shown in Figure~\ref{cbadarcs}.

\begin{figure}[ht]
  \includegraphics[width= 3.5in]{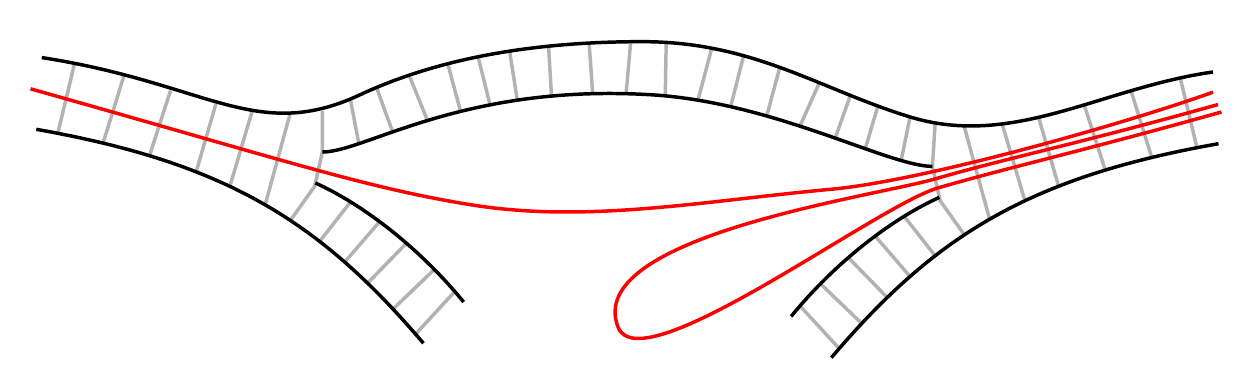}
    \caption{Trivial arcs that cannot appear in almost carried loops.}
  \label{badarcs}
\end{figure}

\begin{remark}\label{enter and leave}
Note that the boundary of a train track $\tau$ is a union of the endpoints of the interval fibers and arcs in the interiors of the singular fibers that define the switches. Thus a loop that is almost carried by $\tau$ can only enter and leave the train track $\tau$ through the switches, as indicated by the arcs in Figure~\ref{ttlocal}.
\end{remark}

\begin{definition}\label{covers} 
A loop $\ell$ that is carried or almost carried by a train track $\tau$ is said to {\it cover} $\tau$ if $\ell$ meets every fiber of $\tau$.
\end{definition} 

We will also need a slightly more general version of this definition that is specific to the plat tracks defined above. As noted above, there is a canonical map from a train track $\tau$ to its diagram $T$ that collapses each interval fiber to a point. Given a subgraph $G$ of a train track diagram $T$ and a loop $\ell$ that is almost carried by $\tau$, we will say that $\ell$ {\it covers} $G$ if the set of carried arcs is sent onto $G$ by the map $\tau \rightarrow T$. In other words, the set of carried arcs must intersect every arc that defines a point in the subgraph.

Given a positive integer $k$, we will say that a loop $\ell$ that is almost carried by a train track $\tau_i$ {\it covers $k$ taos} if $\ell$ covers a subgraph of $T_i$ containing $k$ distinct tao diagrams. Similarly, we will say that $\ell$ {\it covers $k$ taos and an eyelet} if it covers a subgraph of $T_i$ containing $k$ tao diagrams and an eyelet subgraph.

The following lemmas only need to assume that the coefficients in a single row are sufficiently large. In order to prove the most general statement, we will introduce one more definition: We will say that row $i$ of a plat is {\it highly twisted} if $|a_{i,j}| \geq 3$ for each appropriate $j$. So a plat will be highly twisted if every one of its rows is highly twisted.

\begin{lemma}
\label{carried and covers}
If row $i$ of the plat defining $K$ is highly twisted then $\pi_i(\ell_{i+1,j})$ is carried by $\tau_i$ and covers either two taos or one tao and one eyelet.
\end{lemma}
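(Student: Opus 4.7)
The plan is to give an explicit geometric description of $\pi_i(\ell_{i+1,j})$ in $P_i$ and recognize it as a simple closed curve carried by $\tau_i$ that covers the required subgraph. Recall that $\pi_i$ is a homeomorphism of punctured disks $P_{i+1}\to P_i$; it is the identity outside the cross sections in $P_{i+1}$ of the plat tubes $A_{i,k}$ of row $i$, and inside each such cross section it is the inverse braid action, a power of a half-twist $\sigma^{-a_{i,k}}$ on the corresponding twice-punctured disk. Consequently $\pi_i(\ell_{i+1,j})$ agrees with $\ell_{i+1,j}$ outside these cross sections and is a ``twisted U'' inside each cross section whose top puncture pair has one puncture enclosed by $\ell_{i+1,j}$.

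The first step is to determine, as a function of the parity of $i$ and the value of $j$, which plat tubes of row $i$ have an enclosed top-puncture, since those are the only cross sections where $\pi_i$ does nontrivial work. A direct index check using the definitions in Section~\ref{plats} gives three cases: (a) when $i$ is even, both enclosed punctures come from plat tubes in row $i$ (namely $A_{i,j}$ and $A_{i,j+1}$); (b) when $i$ is odd and $2\le j\le m-1$, both still come from plat tubes in row $i$ (namely $A_{i,j-1}$ and $A_{i,j}$); (c) when $i$ is odd and $j\in\{1,m\}$, exactly one enclosed puncture comes from a row $i$ plat tube, while the other is a ``leftover'' puncture at position $1$ or $2m$ passing vertically through row $i$. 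The dichotomy in the statement corresponds to (a), (b) versus (c).

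The second step is to analyze a single cross section. Inside the cross section of $A_{i,k}$, the arc of $\ell_{i+1,j}$ is a simple U around the enclosed puncture with both endpoints on the boundary of the cross section. I would argue that $\sigma^{-a_{i,k}}$ with $|a_{i,k}|\ge 3$ sends this U to an arc that winds around the pair of punctures at least once before ending as a U around one of them, with the direction of winding determined by the sign of $a_{i,k}$. Since $Y_{i,k}$ is right-handed when $a_{i,k}>0$ and left-handed otherwise, this twisted U fits exactly into $Y_{i,k}$: its endpoints lie on $\ell_{i,k}$ with tangent directions compatible with the switches, its winding traces out both arcs of $\ell_{i,k}$ between the two switches, and the transition from winding to the final U crosses the tao arc. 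Thus the twisted U is carried by $Y_{i,k}$ and covers every edge of its diagram. Outside the cross sections, $\pi_i(\ell_{i+1,j})$ is a simple arc that isotopes into the connecting band of $\tau_i$ joining the two taos (or joining the tao to the eyelet in case (c), where the portion near the leftover puncture is a small loop carried by and covering that eyelet).

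The main obstacle is the single-cross-section computation: verifying that $\sigma^{-a}$ applied to a U-around-one-puncture with $|a|\ge 3$ produces a curve whose minimal position representative crosses the tao arc transversely, with the correct signs at the two switches of $Y_{i,k}$ to match the handedness convention. This is the ``sufficient twisting'' step for which the hypothesis $|a_{i,k}|\ge 3$ is essential. Once this local statement is pinned down (via an explicit picture of the twist or by computing the action of $\sigma$ on simple arcs in the twice-punctured disk through a braid group presentation), assembling the local pieces across the involved plat tubes, the connecting band, and in case (c) the eyelet, is routine and produces the global claim.
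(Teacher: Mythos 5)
Your plan is correct and follows essentially the same route as the paper's proof: the paper also writes $\pi_i(\ell_{i+1,j})$ as the Euclidean projection $\sigma_i(\ell_{i+1,j})$ (which crosses either two taos or one tao and an eyelet, passing between the two punctures of each relevant tao disk) followed by $|a_{i,k}|\geq 3$ half twists along $\ell_{i,k}$, and then checks pictorially (Figure~\ref{taospin}) that the twisted arcs are carried by and cover the taos with the handedness convention matching the sign of $a_{i,k}$. The local ``sufficient twisting'' verification you flag as the main obstacle is exactly the step the paper disposes of by that figure and a ``reader can check,'' so nothing in your outline diverges from the published argument.
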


\begin{proof}
Let $\ell'_{i+1,j}$ be the Euclidean projection $\sigma_i(\ell_{i+1,j})$ in the horizontal plane $P_i$. When $i$ is odd and
$j = 0$ or $j = m$, this loop will intersect one of the taos in $\tau_i$ transversely. Otherwise, it will intersect two taos transversely. In particular, for each of these one or two tao diagrams $Y_{i,k}$ in $\tau_i$, there is an arc $\alpha$ of $\ell'_{i+1,j}$ that intersects $Y_{i,k}$ in three points, passing between the two points of the  knot $K$ inside the disk $E_{i,k}$, as on the left side of Figure~\ref{taospin}. To get $\pi_i(\ell_{i+1,j})$ from $\ell'_{i+1,j}$ one needs to apply three or more, half twists along the loop $\ell_{i,k}$ for the relevant values of $k$. The reader can check that because $|a_{i,k}| \geq 3$ and each tao diagram has the correct handedness, the image of $\alpha$ under this twist consists of a subarc carried by $\tau_i$ and two subarcs disjoint from $\tau_i$, as on the right in Figure~\ref{taospin}. Since the rest of $\ell'_i$ is disjoint from $\tau_i$, the loop $\pi_i(\ell_{i+1,j})$ is carried by $\tau_i$, after perhaps pushing the endpoints of the connecting arcs into $\tau_i$.
\end{proof}

\begin{figure}[ht]
\includegraphics[width= 4.5in]{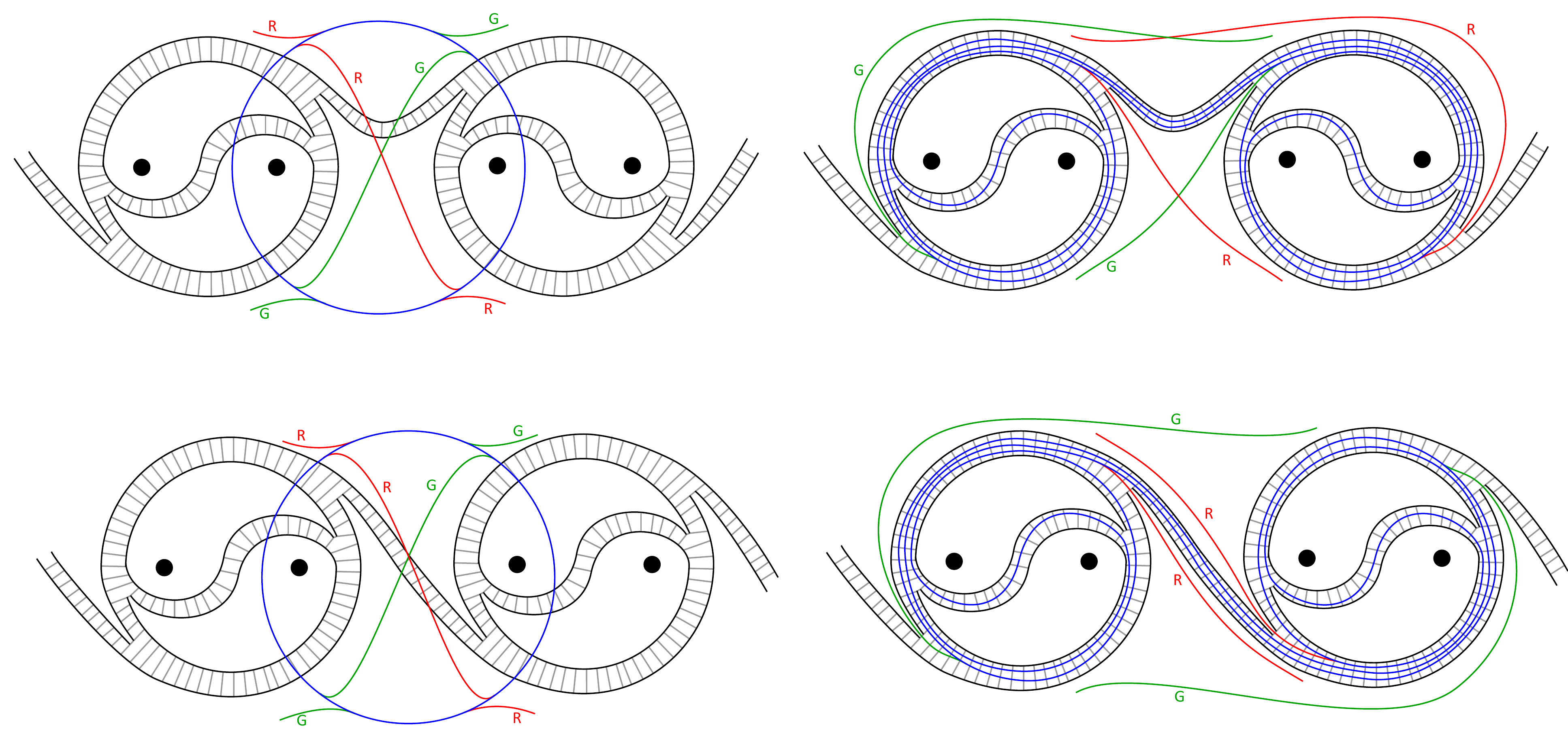}
\caption{The arc $\alpha$ and its image after twisting around $\ell_i$ by three half twists.}
\label{taospin}
\end{figure}

\begin{lemma}
\label{coversdisjointcarried}
Let $\tau \subset \Sigma$ be a train track and $\ell \subset \Sigma$ a loop that is almost carried by $\tau$ and covers $\tau$. If $\ell' \subset \Sigma$ is a simple closed curve disjoint from $\ell$ then there is an isotopy after which $\ell'$ is almost carried by $\tau$.
\end{lemma}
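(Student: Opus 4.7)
The plan is to put $\ell'$ into a canonical position with respect to $\tau$ using the structure forced on $\tau$ by the covering hypothesis on $\ell$, and then to verify conditions (1)--(4) of almost-carrying by innermost-arc arguments in the complement of $\ell$.

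First I put $\ell'$ into general position with $\tau$: transverse to $\partial\tau$, and inside $\tau$ in general position with the interval fibration (with only finitely many fiber tangencies). Over each branch $B$ of $\tau$ (topologically a rectangle foliated by fibers), the covering hypothesis forces $\ell$ to appear in $B$ as $w_B\ge 1$ pairwise disjoint arcs, each transverse to every fiber and running between the two switch arcs that bound $B$. These arcs cut $B$ into $w_B+1$ substrips, and since $\ell'$ is disjoint from $\ell$, every arc of $\ell'\cap B$ lies in a single substrip. In each \emph{interior} substrip, bounded above and below by arcs of $\ell$, the arcs of $\ell'$ have both endpoints on switch arcs; I straighten each such arc by an isotopy inside the strip so that it becomes transverse to every fiber, making it carried. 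In each \emph{boundary} substrip (adjacent to the horizontal boundary of $\tau$), an arc with both endpoints on switch arcs is straightened in the same way; an arc with an endpoint on the horizontal boundary is nudged slightly out of $\tau$ at that endpoint; and an arc cobounding an innermost disk in $\Sigma\setminus\tau$ with a subarc of the horizontal boundary is pushed across that disk entirely out of $\tau$. Each such disk is disjoint from $\ell$ because $\ell\subset\tau$, so none of these isotopies ever crosses $\ell$.

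After these finitely many modifications, conditions (1) and (2) hold. To verify (3) and (4), suppose an arc $\beta$ of $\ell'\setminus\tau$ is parallel into a switch arc or into an arc of fiber endpoints; then $\beta$ cobounds a bigon $D$ with a subarc of $\partial\tau$, and, choosing $\beta$ innermost, $D$ is disjoint from both $\ell$ and the rest of $\ell'$. Pushing $\ell'$ across $D$ strictly decreases the complexity $|\ell'\cap\partial\tau|$, so iterating terminates in a representative of $\ell'$ almost carried by $\tau$. The principal obstacle I anticipate is ensuring that eliminating one violation does not reintroduce another; this is handled by a uniform innermost-first strategy together with the strict monotone decrease of $|\ell'\cap\partial\tau|$ under each move.
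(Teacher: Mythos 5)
Your overall strategy---using the spanning arcs of $\ell$ in each branch to trap $\ell'$ and then straightening branch by branch---is close in spirit to the paper's argument, but as written it has genuine gaps. First, the claim that every arc of $\ell'$ in a substrip ``has both endpoints on switch arcs'' and can be ``straightened \dots so that it becomes transverse to every fiber'' fails for return arcs: an arc of $\ell'\cap B$ may have both endpoints on the \emph{same} end of the branch, and no isotopy inside the substrip makes such an arc transverse to all fibers; it must be pushed across the singular fiber into an adjacent branch or out of $\tau$, and you give no procedure (for instance, first minimizing $|\ell'\cap\Lambda|$, where $\Lambda$ is the union of singular fibers) showing that this terminates without undoing what was already arranged. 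Second, the justification ``$\ell\subset\tau$'' is false: $\ell$ is only \emph{almost} carried, so in general it has arcs outside $\tau$ (this is exactly the situation in the applications, e.g.\ Lemma~\ref{LLCarried} and Corollary~\ref{< r almost carried}), so your pushing disks in $\Sigma\setminus\tau$ need not be disjoint from $\ell$, and the assertion that none of your isotopies crosses $\ell$---on which the persistence of your substrip decomposition tacitly depends---is unjustified. Third, ``nudging slightly out of $\tau$'' an arc meeting the horizontal boundary does not yield an almost carried position: condition (1) of the definition forbids $\ell'$ from meeting fiber endpoints at all, so such a crossing must be rerouted through a switch or the whole incursion pushed out of $\tau$; moreover your terminating quantity $|\ell'\cap\partial\tau|$ is unchanged by the interior straightening and U-turn moves, so the ``innermost-first plus monotone decrease'' scheme in your last paragraph does not actually close the argument.

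For comparison, the paper sidesteps all three issues with one device: since $\ell$ covers $\tau$, every interval fiber meets a regular neighborhood $\NN(\ell)$, so $\tau$ can be isotoped to push all fiber endpoints into $\NN(\ell)$; then $\tau\setminus\NN(\ell)$ is a union of bands of subarcs of fibers, the loop $\ell'$ (disjoint from $\NN(\ell)$) meets $\tau$ only in these bands, and once these intersections are minimized every arc of $\ell'$ in a band is essential in the band, hence can be made transverse to the fibers; the remaining two conditions of almost carrying are then checked directly. Either rework your argument along these lines, or supply the missing return-arc and boundary-crossing cases together with a genuine termination argument.
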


\begin{proof}
Let $\NN = \NN(\ell) \subset \Sigma$ be a regular neighborhood of $\ell$ disjoint from $\ell'$. Because $\ell$ covers $\tau$, every interval fiber intersects $\NN$ in one or more subintervals. There is thus an isotopy of $\tau$ that takes each fiber endpoint into $N$, so that the complement $\tau \ssm \NN$ is a union of intervals in the interiors of the interval fibers. These intervals are properly embedded in $\Sigma \ssm \NN$ and make up parallel families that form bands. 

Assume we have isotoped $\ell'$ so as to minimize its intersection with each of these bands. Then each arc of 
$\ell' \cap (\tau\ssm \NN$) will be essential in a band and can be made transverse to each interval fiber. If any arc of 
$\ell' \ssm \tau$ is isotopic into the interior of a singular fiber then it can be pulled into one of the bands, then across to 
reduce the number of arcs of intersection. 

Thus $\ell'$ can be isotoped so that it satisfies the first three conditions in the definition of almost carried. For the fourth condition, note that if an arc of $\ell' \ssm \tau$ is parallel to an arc composed of interval endpoints then this arc can be isotoped into $\tau$ to an arc in $\tau$ transverse to all the fibers. If we do this for each such arc, then $\ell'$ will be almost carried by $\tau$.
\end{proof}

\begin{definition}
\label{ttcarried}
We will say the a train track $\tau'$ is {\it carried} by a train track $\tau$ if $\tau' \subset \tau$ and each interval fiber of $\tau'$ is contained in an interval fiber of $\tau$. This is equivalent (up to isotopy) to the statement that each edge in the train track diagram for $\tau'$ is transverse to the interval fibers of $\tau$. 

We will say that, a train track $\tau'$ is {\it almost carried} by a train track $\tau$ if the train track diagram $T'$ for $\tau'$ is transverse to the interval fibers, disjoint from their endpoints, and no arc of $T'$ in the complement of $\tau$ is parallel into the interior of a singular interval or an arc of endpoints. 
\end{definition}

Being almost carried is equivalent to the condition that each interval fiber of $\tau'$ is either disjoint from $\tau$ or contained in an interval fiber of $\tau$, plus a condition on the bands of $\tau'$ outside of $\tau$.

\begin{lemma}
\label{LLCarried} 
If row $i$ of the plat defining $K$ is highly twisted then $\pi_i(\tau_{i + 1})$ is almost carried by $\tau_i$ and the union of the loops $\pi_i(\ell_{i + 1, j})$ covers $\tau_i$.
\end{lemma}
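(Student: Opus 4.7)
The plan is to build on Lemma~\ref{carried and covers}, which already shows that each core loop $\pi_i(\ell_{i+1,j})$ is carried by $\tau_i$ and covers at least two taos, or a tao and an eyelet, in the diagram $T_i$. To upgrade this to the entire train track $\tau_{i+1}$, I decompose the diagram $T_{i+1}$ into its constituent pieces: the core loops $\ell_{i+1,j}$, the tao arcs inside each $Y_{i+1,j}$, the compatible connecting arcs joining adjacent taos along the line $\{(x,i+1,0)\}$, and, when $i+1$ is odd, the two eyelet subgraphs. Because each core loop is already known to be carried, the task is to verify the almost carried conditions of Definition~\ref{ttcarried} for the projections of the remaining arcs and to verify the covering statement separately.

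The covering statement is essentially a counting argument. Each $\pi_i(\ell_{i+1,j})$ covers two taos of $\tau_i$ whenever the Euclidean projection $\sigma_i(\ell_{i+1,j})$ meets two tao disks $E_{i,k} \subset P_i$, and it covers a tao together with an eyelet precisely when $\ell_{i+1,j}$ lies adjacent to the leftmost or rightmost puncture column of row $i$. A direct check on the $x$-coordinates of the centers of the $\ell_{i+1,j}$ and $\ell_{i,k}$ shows that, as $j$ varies over its allowed range, the taos and eyelets covered by the various $\pi_i(\ell_{i+1,j})$ jointly exhaust every tao $Y_{i,k}$ of $T_i$ and, when $i$ is odd, both of its eyelets. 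Hence $\bigcup_j \pi_i(\ell_{i+1,j})$ covers $\tau_i$.

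For the almost carried assertion I would treat the additional arcs of $\tau_{i+1}$ one type at a time. Each tao arc inside $Y_{i+1,j}$ lies in a small disk around $\ell_{i+1,j}$, so under $\pi_i$ its image stays in a neighborhood of the carried loop $\pi_i(\ell_{i+1,j})$; the same three-half-twists picture as in the proof of Lemma~\ref{carried and covers}, together with the bound $|a_{i,k}| \ge 3$, shows that this tao arc can be isotoped to run transversely through the interval fibers of $\tau_i$. Each connecting arc between $Y_{i+1,j}$ and $Y_{i+1,j+1}$ projects, after the row-$i$ twist, to an arc whose two ends sit near the images of successive taos, and the compatibility orientations built into the definitions of $T_i$ and $T_{i+1}$ are precisely what guarantee that its endpoints approach the switches of $\tau_i$ from the allowable sides. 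The eyelets of $T_{i+1}$, when $i+1$ is odd, are handled by exactly the same analysis applied to the outermost punctures of row $i+1$.

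The main obstacle is ruling out the two kinds of trivial arcs forbidden by conditions (3) and (4) of the definition of almost carried. The decisive input is again the hypothesis that row $i$ is highly twisted, so that $|a_{i,k}| \ge 3$: the row-$i$ twist spreads any candidate arc of $\pi_i(T_{i+1}) \ssm \tau_i$ across enough interval fibers of $\tau_i$ that it is forced to traverse a full band, rather than slipping back into a singular fiber or an arc of fiber endpoints. Once this is checked from direct pictures analogous to Figure~\ref{taospin}, the almost carried conclusion of the lemma follows.
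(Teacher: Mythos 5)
Your first two paragraphs (the covering count, and the observation that the compatibility/handedness conventions let the endpoints of the connecting arcs reach the switches of $\tau_i$) match the paper's proof: the covering statement is indeed just Lemma~\ref{carried and covers} applied to all $j$, and the switch-compatibility check via sliding endpoints along $\ell_{i+1,j}$ in the direction dictated by the handedness is exactly the extra verification the paper makes. The gap is in your last step. The paper does not rule out the trivial complementary arcs of conditions (3) and (4) by appealing again to $|a_{i,k}|\geq 3$; once the loops $\pi_i(\ell_{i+1,j})$ are known to be carried and to cover $\tau_i$, the remaining arcs of $T_{i+1}$ are disjoint from those loops, and the paper finishes by running the argument of Lemma~\ref{coversdisjointcarried}: push all fiber endpoints of $\tau_i$ into a neighborhood of the covering loops, so that $\tau_i$ minus that neighborhood is a union of bands, and then isotope the interiors of the connecting arcs to be essential in the bands, which is what kills the forbidden arc types. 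Your substitute claim --- that the row-$i$ twisting ``spreads any candidate arc of $\pi_i(T_{i+1})\ssm\tau_i$ across enough interval fibers'' --- does not work as stated: an arc in the complement of $\tau_i$ meets no interval fibers at all, and the high-twist hypothesis has already been spent in Lemma~\ref{carried and covers} to make the loops carried and covering. So the decisive mechanism (covering plus disjointness, i.e.\ Lemma~\ref{coversdisjointcarried}) is missing from your argument, and without it ``check from direct pictures'' is not a proof that conditions (3) and (4) of Definition~\ref{ttcarried} can be achieved for the projected connecting arcs.

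A smaller inaccuracy: the images of the tao arcs of $Y_{i+1,j}$ under $\pi_i$ are not arcs that ``run transversely through the interval fibers of $\tau_i$''; as in Figure~\ref{taospin}, they become short arcs in the complement of $\tau_i$ joining the carried strands between adjacent taos, and one only needs to see that such arcs are not parallel into a singular fiber or an arc of fiber endpoints --- which again follows from the neighborhood-of-covering-loops argument rather than from the size of the twist coefficients.
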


\begin{proof} 
It follows from Lemma~\ref{carried and covers} that the loops $\pi_i(\ell_{i+1,j})$ cover $\tau_i$. The rest of the train 
track $\tau_{i+1}$ is disjoint from these loops, so it might be expected that the result will follow from 
Lemma~\ref{coversdisjointcarried}. In fact, in order to use an argument very similar to the proof of Lemma~\ref{coversdisjointcarried}, it must first be checked that the switches in $\pi_i(\tau_{i+1})$ are compatible with  the train track $\tau_i$. 

The arcs of $T_{i+1}$ near two of the loops $\ell_{i+1,j}$ are shown on the left in Figure~\ref{taospin}, in either red or  green. The green arcs are for a left handed tao and the red arcs are for a right handed tao. For anyone reading this in grey scale, each red or green arc has been labeled with an `R' or `G', respectively. The image of each tao arc under $\pi_i$ is a short arc connecting the tracks between the adjacent taos in $\tau_i$. (The images of the tao arcs are not shown in the lower half of Figure~\ref{taospin}  because  they would be too small to see on this scale.) 

The images of the connecting arcs, unfortunately, will intersect the endpoints of the interval fibers in $\tau_i$. However, note that for a right handed $Y_{i+1,j}$, one can isotope the endpoints of the connecting arcs in a clockwise direction along a right handed $\ell_{i+1,j}$, without crossing the endpoints of the tao arc. Similarly, we can slide the the endpoints of the connecting arcs for a left handed $\ell_{i+1,j}$ in a counter-clockwise direction. Therefore,  the endpoints can be pushed along $\ell_j$ to an arc that exits $\tau_i$ at the interior of a switch, as on the right side in Figure~\ref{taospin}. Now apply the same argument as in the proof of Lemma~\ref{coversdisjointcarried}, using the fact that $\pi_i(\ell_{i+1})$ covers $\tau_i$, to isotope the interiors of the connecting arcs to be almost carried by $\tau_i$.
\end{proof}

The main objective of the above lemmas is the following corollary, which is immediate:

\begin{corollary}
\label{projection coro}
If $\ell$ is almost carried by $\tau_{i+1}$ then $\pi_i(\ell)$ is almost carried by $\tau_i$.\qed
\end{corollary}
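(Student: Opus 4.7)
The plan is to derive the corollary as an immediate consequence of Lemma~\ref{LLCarried} together with the observation that ``almost carried'' is a transitive relation, combined with the fact that $\pi_i$ is a homeomorphism of the ambient punctured sphere. The strategy has three parts, and the bulk of the work is in verifying transitivity.

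First, since $\pi_i : P_{i+1} \to P_i$ is a homeomorphism (canonical up to isotopy fixing the puncture points $K \cap P_y$, as defined in Section~\ref{plats}), the image $\pi_i(\ell)$ is automatically almost carried by the pushed-forward train track $\pi_i(\tau_{i+1})$ in $P_i$: all four conditions of Definition of almost carried are preserved verbatim by a homeomorphism of the surface that respects the punctures. Second, Lemma~\ref{LLCarried} tells us that $\pi_i(\tau_{i+1})$ is almost carried by $\tau_i$ in the sense of Definition~\ref{ttcarried}.

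Third, I would verify the following transitivity statement: if $\tau'$ is almost carried by $\tau$, then every loop almost carried by $\tau'$ is almost carried by $\tau$. Here I would invoke the reformulation noted just after Definition~\ref{ttcarried}: each interval fiber of $\tau'$ is either disjoint from $\tau$ or contained in an interval fiber of $\tau$. Consequently, the transversality of $\ell$ to fibers of $\tau'$ inside $\tau$ upgrades automatically to transversality with fibers of $\tau$, and $\ell$ avoids fiber endpoints of $\tau$ because those either lie outside $\tau'$ altogether, or coincide with fiber endpoints of $\tau'$ which $\ell$ already avoids. For the non-triviality conditions (3) and (4) in the definition of almost carried, one decomposes each arc of $\pi_i(\ell)$ in the complement of $\tau_i$ according to whether it lies in $\pi_i(\tau_{i+1}) \setminus \tau_i$ or in $\Sigma \setminus \pi_i(\tau_{i+1})$: an arc of the former type is a sub-arc of a band of $\pi_i(\tau_{i+1})$ that is itself not parallel into a singular fiber or endpoint arc of $\tau_i$ (by Lemma~\ref{LLCarried}), while an arc of the latter type was already an arc of $\pi_i(\ell)$ in the complement of $\pi_i(\tau_{i+1})$, so the almost-carried hypothesis for $\ell$ (transported by $\pi_i$) forbids it from being trivial.

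The main obstacle will be checking conditions (3) and (4) cleanly, since a trivial arc of $\pi_i(\ell)$ relative to $\tau_i$ could a priori be made up of several sub-arcs alternating between $\pi_i(\tau_{i+1}) \setminus \tau_i$ and the complement of $\pi_i(\tau_{i+1})$, and one must rule out the possibility that such a concatenation bounds a bigon with a singular fiber of $\tau_i$ even though no individual sub-arc does. For the plat tracks this is controlled by the local picture near the switches of $\tau_i$ described in Lemma~\ref{LLCarried} (see Figure~\ref{taospin}), where bands of $\pi_i(\tau_{i+1})$ leaving $\tau_i$ do so only at switches, so any such concatenation would force one of the constituent arcs to be trivial on its own. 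Once this point is settled, chaining the three observations yields the corollary.
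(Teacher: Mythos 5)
Your overall route (push forward by the homeomorphism $\pi_i$, invoke Lemma~\ref{LLCarried}, then compose) is the natural reading of why the authors call this ``immediate,'' but the composition step as you state it contains a genuine gap: the transitivity claim ``if $\tau'$ is almost carried by $\tau$, then every loop almost carried by $\tau'$ is almost carried by $\tau$'' is not true in that generality, and your argument never uses the second half of Lemma~\ref{LLCarried}, namely that the loops $\pi_i(\ell_{i+1,j})$ \emph{cover} $\tau_i$. The problem is exactly the arcs of $\pi_i(\ell)$ that lie entirely outside $\pi_i(\tau_{i+1})$. The almost-carried hypothesis for $\ell$ only says these arcs are not parallel into a switch cusp or endpoint arc of $\tau_{i+1}$; it places no constraint at all on how they meet $\tau_i$. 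Such an arc can run across a band of $\tau_i$ parallel to the interval fibers, pass through fiber endpoints of $\tau_i$, or form a bigon with a singular fiber of $\tau_i$, and your dichotomy (``either outside $\tau'$ altogether, or coinciding with fiber endpoints of $\tau'$'') concedes rather than resolves this: the fibers and endpoints of $\tau_i$ lying outside $\pi_i(\tau_{i+1})$ are precisely the ones these arcs can hit badly. ``Non-trivial with respect to $\pi_i(\tau_{i+1})$'' does not translate into ``carried intersection with, and non-trivial complementary arcs with respect to, $\tau_i$.'' (For a toy counterexample to the bare transitivity statement: take $\tau'$ to be an annular neighborhood of a single carried curve inside a large filling track $\tau$; any curve disjoint from that annulus is vacuously almost carried by $\tau'$, but need not be isotopic to anything almost carried by $\tau$.)

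What rescues the corollary is the covering statement in Lemma~\ref{LLCarried}: since the union of the loops $\pi_i(\ell_{i+1,j})\subset\pi_i(\tau_{i+1})$ covers $\tau_i$, every interval fiber of $\tau_i$ meets $\pi_i(\tau_{i+1})$, so after isotoping the fiber endpoints of $\tau_i$ into a neighborhood of $\pi_i(\tau_{i+1})$ the portion of $\tau_i$ outside that neighborhood is a union of bands with no switches, exactly as in the proof of Lemma~\ref{coversdisjointcarried}. One then minimizes the intersection of the complementary arcs of $\pi_i(\ell)$ with these bands and repeats the argument of that lemma (together with the fact that $\pi_i(\ell)\cap\pi_i(\tau_{i+1})$ is carried, so near the covering loops $\pi_i(\ell)$ runs transverse to the fibers) to conclude that $\pi_i(\ell)$ can be isotoped to be almost carried by $\tau_i$; note also that the conclusion, like Lemma~\ref{coversdisjointcarried}, should be read up to isotopy, and that for odd rows one must allow discarding one eyelet as in the amended definition of almost carried. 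Your paragraph about concatenations of subarcs near switches addresses a secondary issue; the primary missing ingredient is the covering property and the band argument it enables.
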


\section{Stepping through the plat}
\label{disjoint}

\begin{definition}
We will say that a loop $\ell$ that is almost carried by one of the train tracks $\tau_i$ {\it bisects a tao} if it either covers one of the loops $\ell_{i,j}$ or covers one of the tao arcs in $\tau_i$. We will say that $\ell_i$ {\it bisects $t$ taos} if it covers at least $t$ of the tao arcs in $\tau_i$. We will say that $\ell$ {\it bisects $t$ taos and an eyelet} if it covers at least $t$ tao arcs and one of the eyelets in $\tau_i$.
\end{definition}

\begin{lemma}
\label{hitsonetao} 
If $\ell$ is an essential loop that is almost carried by $\tau_i$ then $\ell$ bisects a tao.
\end{lemma}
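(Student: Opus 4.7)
The plan is to prove the contrapositive: if $\ell$ is almost carried by $\tau_i$ and does not bisect any tao, then $\ell$ is inessential in $\Sigma_K$. So assume that for every $j$, the walk of $\ell$ in $T_i$ neither covers the loop cycle $\ell_{i,j}$ nor uses the tao arc edge $t_{i,j}$.

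First I would observe that since no carried arc of $\ell$ crosses the $t_{i,j}$ band of $\tau_i$, and since complementary arcs lie outside $\tau_i$, the curve $\ell$ is disjoint from each tao arc $t_{i,j}$ in $\Sigma$. The next step is to show that $\ell$ can be isotoped within $\Sigma_K$ to be disjoint from every tao diagram $Y_{i,j}$ (and, for odd $i$, from every eyelet as well). For each $j$, because $\ell$ does not cover $\ell_{i,j}$, some arc $e_j$ of the loop between adjacent switches is missed by every carried arc; the carried arcs of $\ell$ lying along $\ell_{i,j}$ are therefore parallel strands in the open band $\ell_{i,j} \ssm e_j$ and can be slid out of $Y_{i,j}$ through the gap at $e_j$. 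The complementary arcs of $\ell$ inside the two once-punctured subdisks of $E_{i,j}$ (bounded by $t_{i,j}$ and arcs of $\ell_{i,j}$) are constrained by conditions (3) and (4) of almost carried to be isotopic rel boundary to arcs that can be slid through the same gap without meeting the enclosed punctures. For odd $i$, an analogous argument at each eyelet uses the switch condition $w_{\text{handle}} = 2\,w_{\text{bigon}}$ together with the cascading switch conditions on $\ell_{i,1}$ and $\ell_{i,m-1}$ to show that any walk that uses an eyelet is forced to cover at least one of $\ell_{i,1}$ or $\ell_{i,m-1}$, contradicting the hypothesis; hence $\ell$ can be isotoped off the eyelets too.

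After performing all of these isotopies, $\ell$ lies in a single component of the complement in $\Sigma$ of the union of all tao diagrams (and of the eyelets, for odd $i$). For even $i$, the components consist of the $2m$ once-punctured subdisks inside the taos and a single outer disk containing no punctures; for odd $i$ there are additionally two once-punctured disks enclosed by the eyelets, with the outer region still puncture-free. In every case every component is a disk or a once-punctured disk, so $\ell$ is null-homotopic or peripheral in $\Sigma_K$, hence inessential, as desired.

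I expect the main obstacle to be the isotopy step for odd $i$: the cascading switch conditions along $\ell_{i,1}$ or $\ell_{i,m-1}$, including the signs dictated by the tangent directions at the handle and at the adjacent connecting arc, must be carefully tracked to conclude that a walk supported at an eyelet but avoiding the adjacent tao arc is forced to use every arc of the adjacent loop. Only once this is established can one conclude that the hypothesis of not bisecting any tao rules out essential curves that wrap both eyelet punctures, which would otherwise be a counterexample.
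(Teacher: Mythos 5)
Your contrapositive strategy founders at the final step. After isotoping $\ell$ off the tao diagrams (and eyelets), you assert that every complementary component of the union of the tao diagrams is a disk or a once-punctured disk, with ``a single outer disk containing no punctures.'' That is false for $m \geq 3$: the outer component is a sphere with $m$ (respectively $m-1$ plus two eyelet) disks removed, a planar surface with several boundary circles. A curve in that region, although disjoint from every $Y_{i,j}$, can enclose one or more entire taos and hence two or more punctures, so it can perfectly well be essential in $\Sigma_K$. It is exactly the connecting arcs (and eyelet stems) of $T_i$ that cut this outer region into disks, and your argument never controls how $\ell$ meets the connecting bands; after your sliding isotopies $\ell$ may still run along connecting bands and wind around taos, so inessentiality does not follow. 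The paper instead works with the whole track $\tau_i$: its complement \emph{is} a union of disks and once-punctured disks, so an essential almost carried loop has nonempty carried intersection with $\tau_i$, and one then checks that a carried arc cannot lie only in connecting bands and that covering any single edge of a tao forces covering either the tao arc or the whole loop $\ell_{i,j}$ --- which is precisely bisecting. To repair your proof you would have to show that a loop covering no tao edge can be isotoped off all of $T_i$, not merely off the taos, and that requires the same switch-by-switch analysis.

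Two further weaknesses. The step where you slide the strands of $\ell$ lying along $\ell_{i,j}$ out ``through the gap at $e_j$'' is not innocent: those strands end on switches where connecting bands, the tao arc, or an eyelet stem attach, so they are pushed into adjacent complementary regions rather than removed, and this is exactly where the problem above re-enters. And the eyelet argument via weight equalities at the switches is not available here: an almost carried loop may enter and leave $\tau_i$ through the switches (Remark~\ref{enter and leave}), so the weights of its carried arcs satisfy no switch equations, and no ``cascading'' along $\ell_{i,1}$ or $\ell_{i,m-1}$ can be deduced in that way.
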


\begin{proof}
Consider the bridge sphere $\Sigma_i$ defined by the plane $P_i$. The complement in $\Sigma_i$ of $\tau_i$ is a collection of disks, each of which has at most one puncture, every essential loop in $P_i$ must intersect $\tau_i$. If $\ell$ is almost carried by $\tau_i$ then $\ell \cap \tau_i$ is either a loop or a collection of arcs that are carried by $\tau_i$. A connecting arc between the taos cannot fully carry an arc on its own, so every carried arc or loop must cover one of the edges of a tao. Moreover, the reader can check that any arc or loop that covers one edge of a tao must either cover the tao arc or be a loop that follows the outer loop. Thus, since $\ell$ covers an arc of some $Y_{i,j}$, it must bisect a tao. 
\end{proof}

\begin{lemma}
\label{up the ladder} 
Suppose that $K$ is defined by a plat in which row $i$ is highly twisted and $\ell$ is a loop that is almost carried by $\tau_{i+1}$. If $\ell$ bisects $t$ taos in $\tau_{i+1}$ then $\pi_i(\ell)$ either bisects $t +1$ taos in $\tau_i$ or bisects $t$ taos and an eyelet. 
\end{lemma}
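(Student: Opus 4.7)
The plan is to push the bisection data from $\tau_{i+1}$ down to $\tau_i$ via the projection $\pi_i$, using the structural lemmas from Section~\ref{carrying}. By Corollary~\ref{projection coro}, $\pi_i(\ell)$ is almost carried by $\tau_i$, so the conclusion reduces to counting the tao arcs and eyelets of $\tau_i$ that $\pi_i(\ell)$ covers. Enumerate the tao arcs that $\ell$ covers in $\tau_{i+1}$ as those of $Y_{i+1,j_1},\dots,Y_{i+1,j_t}$ with $j_1<\cdots<j_t$.

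First I would establish a local assertion: for each $k$, the subarc of $\pi_i(\ell)$ arising as the $\pi_i$-image of the tao arc of $Y_{i+1,j_k}$ covers the tao arcs of both features of $\tau_i$ that $\pi_i(\ell_{i+1,j_k})$ covers by Lemma~\ref{carried and covers} (two taos, or one tao and one eyelet at an extreme index). The argument is that $\pi_i\colon P_{i+1}\to P_i$ is a homeomorphism, and by the row-to-row offset of the plat structure it sends the two punctures enclosed by $\ell_{i+1,j_k}$ to two punctures in $P_i$ lying in \emph{distinct} features — one per feature. Since the tao arc of $Y_{i+1,j_k}$ separates its two enclosed punctures, its image is an arc in $P_i$ that, together with $\pi_i(\ell_{i+1,j_k})$, separates the two image punctures. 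Being almost carried by $\tau_i$ (Lemma~\ref{LLCarried}) and required to isolate one puncture inside each feature from the other puncture of that same feature, the image arc is forced through the tao arc of each target feature, since the tao arc is the unique properly embedded separating arc inside each feature.

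Next I would carry out the combinatorial union count. For $i+1$ even the correspondence is $j_k\mapsto\{[j_k-1],[j_k]\}$, and for $i+1$ odd it is $j_k\mapsto\{[j_k],[j_k+1]\}$, where an extreme index of $\tau_{i+1}$ is replaced by the corresponding eyelet of $\tau_i$ when one exists. Consecutive indices $j_k$ and $j_{k+1}=j_k+1$ share exactly one element of their pair, so the union of the $t$ pairs contains at least $t+1$ distinct features of $\tau_i$. If all of them are taos, then $\pi_i(\ell)$ covers at least $t+1$ tao arcs and so bisects $t+1$ taos. If an eyelet appears in the union, then $\pi_i(\ell)$ covers at least $t$ tao arcs together with an eyelet, and so bisects $t$ taos and an eyelet.

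The main obstacle I anticipate is the local step: one must rule out that the image of the tao arc of $Y_{i+1,j_k}$ takes a shortcut through $\tau_i$, for instance by winding around an outer loop of one of the target features instead of traversing its tao arc, which would spoil the count. The highly-twisted hypothesis on row $i$ enters precisely here, through Lemma~\ref{carried and covers}: the strong twisting forces $\pi_i(\ell_{i+1,j_k})$ to fully span both target features, fixing the homotopy class of the image separating arc in the twice-punctured disk bounded by $\pi_i(\ell_{i+1,j_k})$ and compelling it through both tao arcs.
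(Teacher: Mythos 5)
Your overall skeleton matches the paper's: project via Corollary~\ref{projection coro}, prove a local covering statement for each bisected tao, then take the union with overlaps to get $t+1$ taos or $t$ taos and an eyelet; the counting step is fine. But the justification of the local step has a genuine gap. You argue that since the tao arc of $Y_{i+1,j_k}$ separates the two punctures of $E_{i+1,j_k}$, its image is the essentially unique separating arc in the twice-punctured disk bounded by $\pi_i(\ell_{i+1,j_k})$, and that fixing this isotopy class ``compels'' the arc through both target tao arcs. That inference is false: the $\pi_i$-image of the tao arc itself lies in exactly that isotopy class and is compatible with $\tau_i$, yet it is a \emph{short} arc running between the two target taos that covers neither of their tao arcs --- this is stated explicitly in the proof of Lemma~\ref{LLCarried} (``the image of each tao arc under $\pi_i$ is a short arc connecting the tracks between the adjacent taos''). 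So the isotopy class of the separating arc in the image disk, even combined with almost-carriedness, does not force coverage; and your stronger claim that the image arc must ``isolate one puncture inside each feature from the other puncture of that same feature'' is asserted rather than proved --- it is essentially equivalent to the statement you are trying to establish.

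What is missing is the paper's key local observation: an arc of $\ell$ that covers a tao arc of $Y_{i+1,j_k}$ is forced, by the switch/tangency structure of the train track, to also cover the two arcs of the outer circle $\ell_{i+1,j_k}$ into which the outgoing tangent vectors point, and it is these outer-circle portions whose images, under the $|a_{i,\cdot}|\geq 3$ half twists of row $i$ with the handedness built into the taos of $\tau_i$, wrap around the two target features and cover their tao arcs (or one tao arc and an eyelet at an extreme position); this is the verification carried out with Figure~\ref{taospinarcs}, parallel to the proof of Lemma~\ref{carried and covers}. Without that ingredient the separation argument proves nothing beyond what the image of the tao arc alone already satisfies. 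A secondary omission: by definition $\ell$ can bisect a tao by covering the loop $\ell_{i+1,j}$ rather than a tao arc (for instance when $\ell$ is isotopic to that loop); the paper disposes of this case separately via Lemma~\ref{carried and covers}, whereas your enumeration assumes every bisected tao contributes a covered tao arc.
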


\begin{proof}
By Corollary~\ref{projection coro}, $\pi_i(\ell)$ will be almost carried by $\tau_i$ since $\ell$ is almost carried by $\tau_{i + 1}$. If $\ell$ is isotopic to the loop defining a tao then $k = 1$ and by Lemma~\ref{carried and covers}, $\ell$ covers two taos or one tao and one eyelet in $\tau_i$. 

Otherwise, $\ell$ must cover $t$ tao arcs in $\tau_{i+1}$. Any arc $\alpha$ carried by a tao arc must also cover the two arcs of the outer circle into which the tangent vectors, coming out of the tao arc, point. Such arcs are shown on the left in Figure~\ref{taospinarcs}, superimposed on $\tau_i$.

\begin{figure}[ht]
\includegraphics[width= 4.5in]{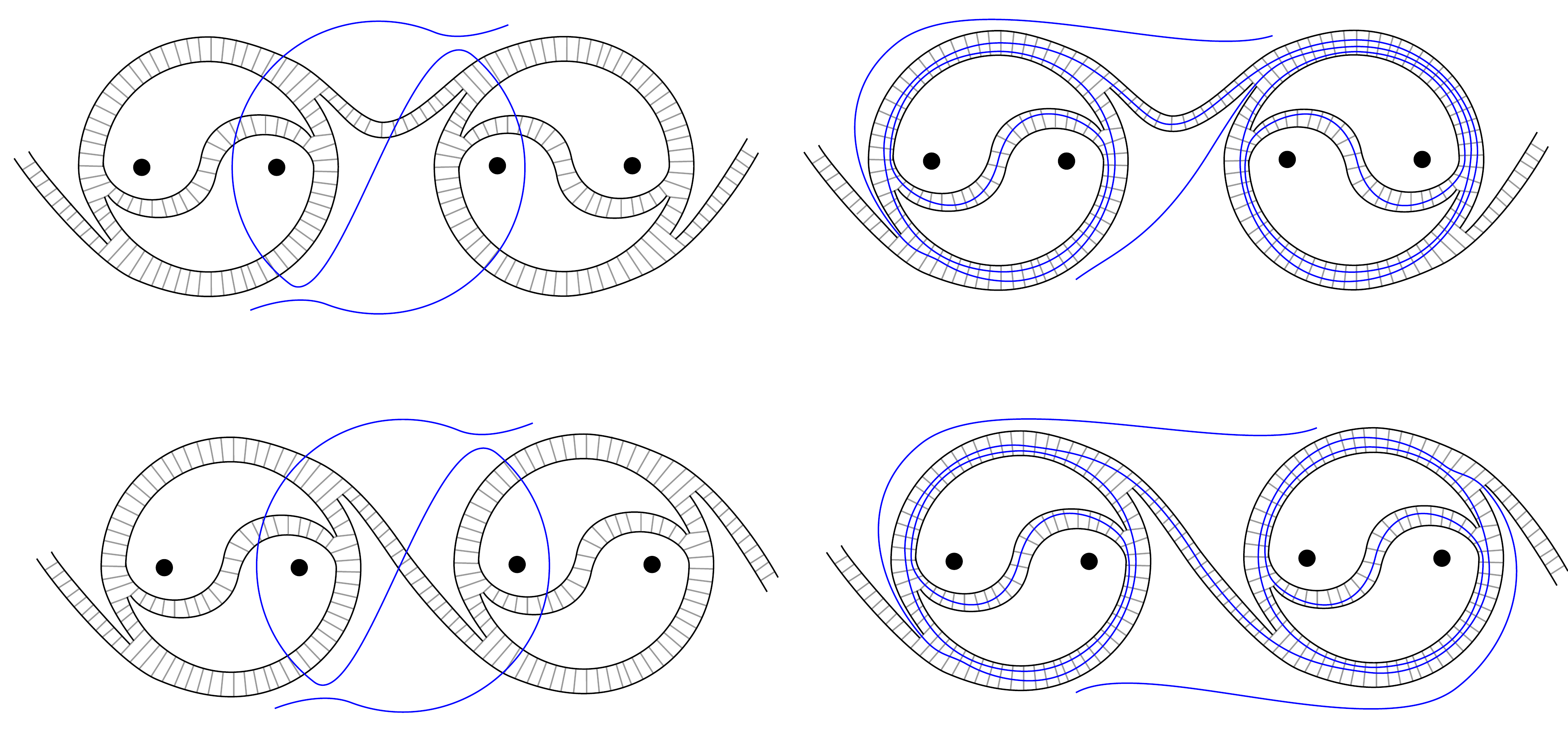}
\caption{The arc $\alpha$ and its image after twisting around $\ell_i$ by three half twists.}
\label{taospinarcs}
\end{figure}

	The arc $\alpha$ intersects either two taos or one tao and one eyelet. The reader can check that, as indicated in  Figure~\ref{taospinarcs}, the image $\pi_i(\alpha)$ will cover these two taos or one tao and one eyelet. If $\ell$ covers $k$ adjacent tao arcs in $\tau_{i+1}$ then, after taking into account the overlaps between them, we find that their images will cover either $t + 1$ taos in $\tau_i$ or $t$ taos and one eyelet.
\end{proof}

\section{Transverse loops} 
\label{top and bottom waves}

Lemma~\ref{up the ladder} gives us a good idea of how later loops in a path in $\mathcal{C}(\Sigma)$ behave with respect to the train tracks, as long as we know that the initial loop is almost carried. When we consider paths between any two loops in the disk sets in $\mathcal{C}(\Sigma)$ of a bridge surface $\Sigma$, we can't assume that our initial loop is carried by a train track. However, we can still gain a reasonable amount of control over the first loop if we broaden our ideas of how a loop should be allowed to intersect a train track.

\begin{definition}
\label{def:transverse}
A loop $\ell \subset \Sigma$ is said to be {\it transverse to a train track} $\tau$ if:
\vskip4pt
\begin{enumerate}
\item Every component of $\ell \cap \tau$ is either carried by $\tau$ (i.e.\ transverse to the interval fibers and disjoint from their endpoints) or an arc contained in an interval fiber,
\vskip4pt
\item no arc of $\ell$ in the complement of $\tau$ is parallel into the interior of a singular arc defining an open switch or into an arc of fiber endpoints and
\vskip4pt
\item no component of $\ell \ssm \tau$ is an arc that is parallel into an arc of $\partial \tau$ that intersects zero or one singular fibers.
\end{enumerate}
\end{definition}

Note that almost carried loops are by definition transverse. However, there are two important additions in the definition of transverse loops: First, a transverse loop $\ell$ is allowed to cut across the train track parallel to the fibers. Such a loop is thus allowed to intersect the endpoints of the interval fibers, allowing for a wider variety of behavior in the complement of $\tau$. As a result, we have to rule out the types of arcs labeled (1), (2) and (3) in Figure~\ref{morebadarcs}. As we will see (and as suggested by Figure~\ref{morebadarcs}), if any of these types of arcs occur, we can isotope $\ell$ to reduce its intersection with $\tau$.

\begin{figure}[ht]
\includegraphics[width= 3.5in]{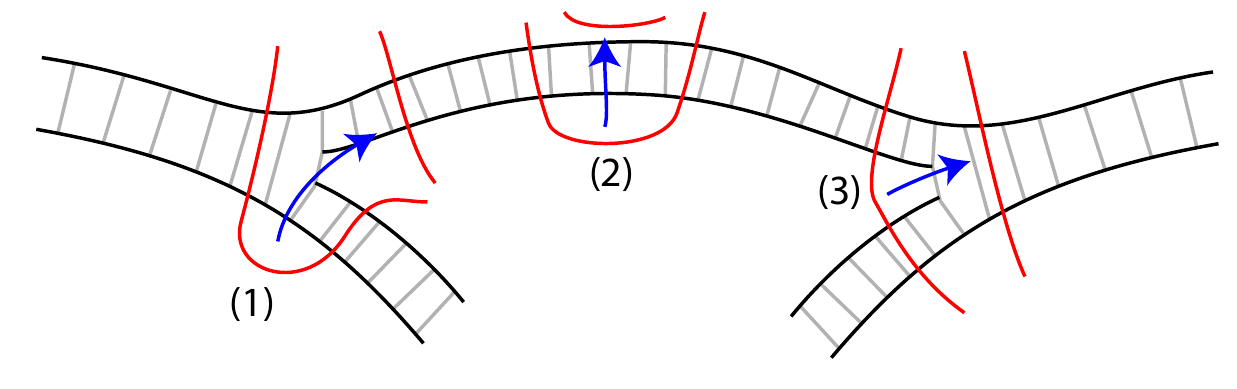}
\caption{Arcs that are ruled out by the third condition on transverse loops, and isotopies that remove them.}
\label{morebadarcs}
\end{figure}

\begin{lemma}
\label{lem:maketransverse}
Given a train track $\tau \subset \Sigma$, every loop $\ell$ in $\Sigma$ is isotopic to a loop that is transverse to $\tau$.
\end{lemma}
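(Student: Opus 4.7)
The plan is to start with $\ell$ in general position with respect to $\tau$ and its fiber structure, then perform a finite sequence of complexity-reducing isotopies to achieve each of the three conditions of Definition~\ref{def:transverse} in turn.

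First, I would put $\ell$ in general position so that $\ell \cap \partial \tau$ is a finite set of transverse intersection points, none of which are endpoints of interval fibers, and so that $\ell$ is tangent to the interval fibers of $\tau$ at only finitely many points. Inside $\tau$, I use the interval bundle structure. Each component of $\ell \cap \tau$ is either an arc with endpoints on $\partial \tau$ or (if $\ell$ lies in a single connected region of $\tau$) a closed loop. At each tangency of such a component with a fiber, the component either has a local maximum or minimum with respect to the fiber direction. If the entire component lies in a single fiber, it already satisfies condition (1); otherwise, I can resolve each local extremum by a small isotopy that pushes the arc either out of $\tau$ (across the endpoints of fibers) or through the fiber. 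After finitely many such resolutions, every component of $\ell \cap \tau$ either is transverse to every fiber it meets and disjoint from the fiber endpoints, or is contained in an interval fiber; this establishes condition~(1).

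Next, I handle violations of conditions~(2) and~(3) by a standard innermost-arc argument, using $|\ell \cap \partial\tau|$ as a complexity. If some arc $\beta$ of $\ell \ssm \tau$ is parallel in $\Sigma \ssm \tau$ into the interior of a singular fiber or into an arc of interval-fiber endpoints, then $\beta$ and a subarc of $\partial \tau$ cobound an embedded disk $D$ disjoint from $\ell$ (choose $\beta$ innermost among such parallelisms). Isotoping $\ell$ across $D$ removes the two endpoints of $\beta$ from $\ell \cap \partial \tau$ and leaves the local picture in $\tau$ unchanged away from $D$, strictly reducing $|\ell \cap \partial\tau|$. Similarly, if some arc $\alpha$ of $\ell \cap \tau$ is parallel in $\tau$ into an arc of $\partial \tau$ meeting zero or one singular fibers, it cobounds a disk in $\tau$ with such an arc; pushing $\alpha$ across this disk either eliminates $\alpha$ entirely (the zero-singular-fiber case, a bigon) or replaces it by an arc lying outside $\tau$, as illustrated in Figure~\ref{morebadarcs}. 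Again $|\ell \cap \partial\tau|$ strictly decreases.

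Since each move strictly decreases the nonnegative integer $|\ell \cap \partial\tau|$, the process terminates after finitely many steps. A check of the local models shows that none of the moves introduces new violations of condition~(1) (the isotopies take place in small collar neighborhoods of $\partial \tau$ and of switch arcs), so the final position of $\ell$ satisfies all three conditions, making it transverse to $\tau$. The main technical subtlety is verifying that the reduction move for condition~(3), in the case where the bounding arc of $\partial\tau$ passes through a single switch, can be performed without creating a new tangency with a fiber; this is handled by pushing the isotopy slightly past the switch so that the resulting arc of $\ell\ssm\tau$ exits through a switch interior rather than through a fiber endpoint.
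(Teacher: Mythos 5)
Your overall strategy (general position, then complexity-decreasing local moves) is reasonable in spirit, but as written it has a genuine gap: the claim that the moves used to repair conditions (2) and (3) ``introduce no new violations of condition (1),'' together with the claim that each move strictly decreases $|\ell \cap \partial\tau|$, is not established and is in fact false for the move you prescribe for condition (2). If an arc $\beta$ of $\ell \ssm \tau$ is parallel into the interior of a singular fiber and you push $\ell$ across the parallelism disk, the pushed-in copy of $\beta$ runs \emph{parallel} to the singular fiber (i.e.\ along the fiber direction in the local model of Definition~\ref{train track}), and it is joined at both ends to carried arcs, which are transverse to the fibers. The resulting component of $\ell \cap \tau$ is neither carried nor contained in a single interval fiber, so condition (1) fails again; the same problem occurs when $\beta$ is parallel into an arc of fiber endpoints and the adjacent components of $\ell \cap \tau$ are fiber arcs. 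Worse, your stage-one move for resolving a tangency (pushing a local extremum out of $\tau$ across the fiber endpoints) creates precisely an arc of $\ell \ssm \tau$ parallel into an arc of fiber endpoints, which your stage-two move then pushes back in, recreating the tangency: the two families of moves can undo each other, and the single complexity $|\ell \cap \partial\tau|$ does not rule this out, so termination in a position satisfying all three conditions of Definition~\ref{def:transverse} simultaneously is not proved. The subtlety you flag at the end (the condition-(3) move near a switch) is not the problematic one.

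For comparison, the paper's proof sidesteps this interaction entirely by working at the level of the train track diagram $T$: isotope $\ell$ to minimize $|\ell \cap T|$, which (since $T$ is trivalent) kills every arc of $\ell \ssm T$ parallel to a subarc of $T$ meeting at most one vertex; then blow $T$ up to $\tau$ so that every component of $\ell \cap \tau$ is an interval fiber. In that position condition (1) holds because \emph{only} fiber arcs occur (no carried arcs need to be produced or preserved), condition (2) is vacuous, and a violation of condition (3) would contradict minimality of $|\ell \cap T|$. If you want to keep your direct approach, you should replace your complexity by one measured against the diagram (e.g.\ the number of fiber arcs, minimized over the isotopy class) or otherwise prove a genuine termination statement covering the interaction between the condition-(1) moves and the condition-(2)/(3) moves; as the argument stands, that is the missing step.
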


\begin{proof}
Let $T$ be a diagram for the train track $\tau$. Isotope $\ell$ to be transverse to $T$, i.e.\ so that $\ell$ is disjoint from the vertices of $T$ and intersects each edge in a finite number of transverse points. Because $T$ is a trivalent graph, if any arc $\alpha$ in $\ell \ssm T$ is parallel to an arc $\beta$ in $T$ that intersects one or zero vertices of $T$ then we can isotope $\alpha$ onto and across $\beta$, reducing the number of points in $\ell \cap T$ by one. Thus if we isotope $\ell$ so that $\ell \cap T$ is minimal then there will be no such $\alpha$, $\beta$ arcs .
 
Isotope $\tau$ onto a small regular neighborhood of $T$ so that the intersection $\ell \cap \tau$ will be a finite number of arcs, each corresponding to a point of $\ell \cap T$ and each parallel to some interval fiber of $\tau$. We can isotope $\ell$ further so that each intersection arcs is in fact an interval fiber. This loop 
$\ell$ satisfies the first condition of Definition~\ref{def:transverse}. Because the arcs of $\ell \ssm \tau$ do not have endpoints in the interiors of singular fibers of $\tau$, the second condition is vacuously satisfied. Finally, if any arc $\alpha'$ of $\ell \ssm \tau$ were parallel to an arc of $\partial \tau$ that intersects one or fewer singular fibers, then we could extend $\alpha'$ to an arc $\alpha$ of $\ell \ssm T$ of the form that was ruled out by the minimality of $\ell \cap T$. Thus we conclude that $\ell$ is transverse to $\tau$.
\end{proof}

Note that the construction in the proof of Lemma~\ref{lem:maketransverse} always produces a loop in which none of the arcs of intersection are carried by $\tau$. In some cases there will be a further isotopy, as indicated in Figure~\ref{trvtocarried}, that replaces a portion of $\ell$ with an arc that is carried by $\tau$. The initial position of $\ell$ is allowed by Definition~\ref{def:transverse} because the arc of $\partial \tau$ that is parallel to the arc of $\ell$ intersects the endpoints of multiple (in this case, four) singular fibers.

\begin{figure}[ht]
\includegraphics[width= 4.5in]{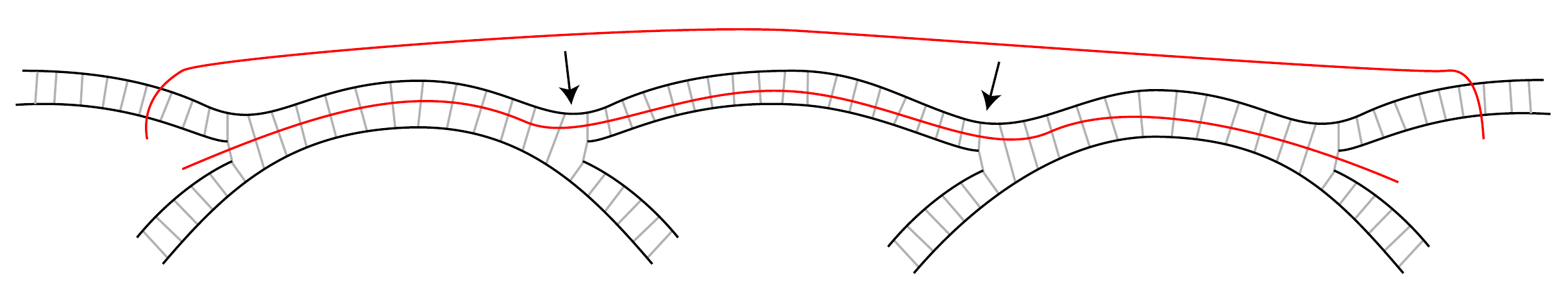}
\caption{Isotoping a transverse arc to a carried arc.}
\label{trvtocarried}
\end{figure}

\begin{definition}
We will say that a transverse loop $\ell$ \textit{covers} a subgraph $G$ of a train track graph $\tau$ if the set of carried arcs covers $G$.
\end{definition}

\begin{lemma}
\label{diskcoversone}
Let $D$ be a properly embedded, essential disk above the bridge surface $P_n$. Then $\pi_{n-1}(\partial D)$ is isotopic to a loop that is transverse to $\tau_{n-1}$ and covers at least one tao of $\tau_{n-1}$.
\end{lemma}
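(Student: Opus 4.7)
The plan is to first isotope $\pi_{n-1}(\partial D)$ into transverse position with respect to $\tau_{n-1}$ via Lemma~\ref{lem:maketransverse}, then argue that the resulting loop covers at least one tao. The approach is a case analysis based on the cut-system structure of the upper handlebody $V$ above $P_n$. The canonical compressing loops $\{\ell_{n, j}\}$ bound a cut system for $V$, so by standard disk-exchange arguments in the handlebody I would first isotope $\partial D$ to minimal position with respect to $\bigcup_j \ell_{n, j}$. Two cases arise: either (Case~1) $\partial D$ is isotopic to some $\ell_{n, j_0}$, or (Case~2) $\partial D$ is disjoint from $\bigcup_j \ell_{n, j}$ and lies essentially in the planar surface $P_n \ssm K \ssm \bigcup_j E_{n, j}$.

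In Case~1 the conclusion is immediate from Lemma~\ref{carried and covers}: $\pi_{n-1}(\ell_{n, j_0})$ is already carried (hence transverse) by $\tau_{n-1}$ and covers either two taos or one tao and one eyelet, so in particular covers at least one tao.

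In Case~2 (which only occurs for $m \geq 4$, since for $m = 3$ the planar surface is a pair of pants in which every essential curve is boundary-parallel), I would use disjointness of $\partial D$ from $\bigcup_j \ell_{n, j}$ to deduce that $\pi_{n-1}(\partial D)$ is disjoint from $\bigcup_j \pi_{n-1}(\ell_{n, j})$; by Lemma~\ref{LLCarried} this latter family covers $\tau_{n-1}$, so a multi-curve generalization of Lemma~\ref{coversdisjointcarried} isotopes $\pi_{n-1}(\partial D)$ to be almost carried (hence transverse) by $\tau_{n-1}$. At this point Lemma~\ref{hitsonetao} gives that $\pi_{n-1}(\partial D)$ bisects a tao, and I would then upgrade bisecting to covering by exploiting the compressing-disk structure of $\partial D$: since $\partial D$ separates the canonical disks $\{E_{n, j}\}$ into two nonempty groups, at least one $E_{n, j_0}$ is enclosed on one side, and its image $\pi_{n-1}(E_{n, j_0})$ is a heavily twisted disk about a tube in row $n-1$. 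The twisting by $|a_{n-1, j_0'}| \geq 3$ forces the carried arcs of $\pi_{n-1}(\partial D)$ to traverse both the outer loop $\ell_{n-1, j_0'}$ and the tao arc of the corresponding tao diagram $Y_{n-1, j_0'}$, analogously to how the twist produces the carried-arc structure in the proof of Lemma~\ref{carried and covers}.

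The main obstacle will be the final upgrade from ``bisects a tao'' to ``covers a tao'' in Case~2. This requires a careful geometric argument showing that, given the hypothesis $|a_{n-1, j}| \geq 3$, no component of the almost-carried projection can traverse only one of the outer loop and the tao arc of every enclosed tao without the other; the compressing-disk property ensures the loop winds around the twisted tube enough times to pick up both pieces of the tao diagram simultaneously.
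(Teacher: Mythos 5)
There is a genuine gap, and it sits exactly where the real work of this lemma lies. Your case analysis rests on the claim that, after minimizing $|\partial D \cap \bigcup_j \ell_{n,j}|$, either $\partial D$ is isotopic to some $\ell_{n,j_0}$ or it is disjoint from all of the $\ell_{n,j}$. That dichotomy is false. First, the loops $\ell_{n,j}$ do not bound a cut system in the usual sense: each bounds a separating disk cutting off a solid torus (a neighborhood of one bridge), and cutting along all of them does not yield a ball. More importantly, there are plenty of essential disks above $P_n$ whose boundaries intersect the $\ell_{n,j}$ essentially and cannot be isotoped off them: for example, the band sum of the canonical disks bounded by $\ell_{n,1}$ and $\ell_{n,2}$ along a band whose core runs through $E_{n,3}$ between the two punctures of that bridge gives an embedded compressing disk whose boundary has nonzero geometric intersection number with $\ell_{n,3}$ (the arcs of its boundary inside $E_{n,3}$ separate the two punctures, so the intersections are not removable). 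Disk-exchange/outermost-arc arguments do not let you isotope such a disk off the $\ell_{n,j}$; they only produce a ``wave.'' So your Cases 1 and 2 omit the generic case, which is precisely the one the paper's proof is built to handle: it takes the bridge disks $D_1,\dots,D_m$ lying in the half-plane $z=0$, $y\geq n$ (meeting $P_n$ in the arcs $\alpha_i$), removes circles of $D\cap D_i$ by irreducibility, and looks at an outermost arc of $D\cap\bigcup_i D_i$ in $D$. Minimality forces the resulting subarc $\gamma\subset\partial D\cap P_n$ to cut across some tao disk between its two punctures while meeting the line $z=0$ (hence the tao arc) at most twice, and then the hypothesis $|a_{n-1,j}|\geq 3$ guarantees that the image of such an arc under the row-$(n-1)$ twisting is carried by $\tau_{n-1}$ and covers that tao. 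This wave mechanism is the key idea, and it is absent from your proposal.

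Even within your Case 2 the argument is incomplete, as you acknowledge: ``almost carried'' plus Lemma~\ref{hitsonetao} only yields that $\pi_{n-1}(\partial D)$ \emph{bisects} a tao (covers the outer loop or the tao arc), which is strictly weaker than covering a tao in the sense of the lemma (covering the whole tao diagram) -- for instance a loop parallel to $\ell_{n-1,j}$ bisects but does not cover. The passage from bisecting to covering cannot be done by a general statement about almost-carried essential loops; it must use the fact that $\partial D$ bounds a disk above $P_n$, and the appeal to ``winding around the twisted tube'' does not supply the needed control on how arcs of $\partial D$ meet the tao disks. The paper gets exactly that control from the outermost-arc argument (each wave meets the tao arc at most twice, and simplicity of $\partial D$ propagates this bound to every arc of $\partial D$ in the tao disk), after which the $\geq 3$ half-twists produce a carried arc covering a tao, as in the proof of Lemma~\ref{carried and covers}. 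Your Case 1 is fine, but to repair the proof you should replace the cut-system dichotomy with the wave argument relative to the bridge disks above $P_n$.
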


\begin{proof}
By construction, the link $L$ intersects the half-plane $\{(x,y,z)\ |\ z=0,\ y\geq n\}$ in $m$ arcs, each of which has its endpoints in the boundary of the half-plane and separates a disk from the half-plane. We will consider the case when $n$ is even and these disks are pairwise disjoint. For $n$ odd, one of the disks will contain the others and the argument is more complicated, but very similar. Let $D_1,\ldots, D_m$ be these disks and note that each $D_i$ intersects the plane $P_n$ in the arc $\alpha_i = \{(x,y,z)\ |\ z=0,\ y = n,\ 2i-1 \leq x \leq 2i\}$.

Let $D$ be a compressing disk above $P_n$ and assume $D$ is transverse to $D_1,\ldots, D_m$. Any loops of intersection between $D$ and each $D_i$ will determine $2$-spheres contained in the handlebody  $\mathcal{B}^+ \ssm K$  bounded by $P_n$. Since handlebodies are irreducible, such loops of intersection can be removed. Hence  $D \cap D_i$ is a collection of arcs for each $i$. Let $A \subset D$ be the intersection of $D$ with the entire collection of disks $\{\bigcup_i D_i\}$ and let $\beta \subset A$ be an outermost arc in $D$ bounding a disk $E \subset D$. Let $\gamma \subset \partial D \cap P_n$ be the arc of $\partial D$ that shares its endpoints with $\beta$ and let $\delta \subset \alpha_i$ be the arc in the appropriate $\partial D_i$ that shares its endpoints with $\beta$.

The arcs $\gamma$ and $\delta$ form a loop in $P_n$. If this loop bounds a disk with interior disjoint from the arcs $\alpha_i$ then this disk defines an isotopy that removes the arc $\beta$ from $D \cap \{\bigcup_i D_i\}$. Thus, if  this intersection has been minimized, the interior of the disk $E$ bounded by $\gamma \cup \delta$ must intersect one or more of the arcs $\alpha_i$. By construction, the arc $\gamma$ is disjoint from the arcs $\alpha_i$ away from its endpoints and the arc $\delta$ is contained in one of the arcs $\alpha_i$. So this implies that $E$ contains one or more of the arcs $\alpha_i$ in its interior.

Since each arc $\alpha_i$ is in the line $z = 0$ in $P_n$ and $\gamma \subset P_n$ is on the same side of this line near both its endpoints, this implies that the arc $\gamma$ must cross the line $z = 0$ at least twice. Moreover, if all the points of intersection are at values of $x$ greater than $2n$ or less than $1$ then $\gamma$ will be a trivial arc (possibly after passing it across the point at infinity, which corresponds to an isotopy in $\Sigma$.) This again contradicts the minimal intersection assumption. Therefore, a subarc of $\gamma$ must cut across one of the taos, as indicated in Figure~\ref{ttwaves}.

\begin{figure}[ht]
\includegraphics[width= 3.5in]{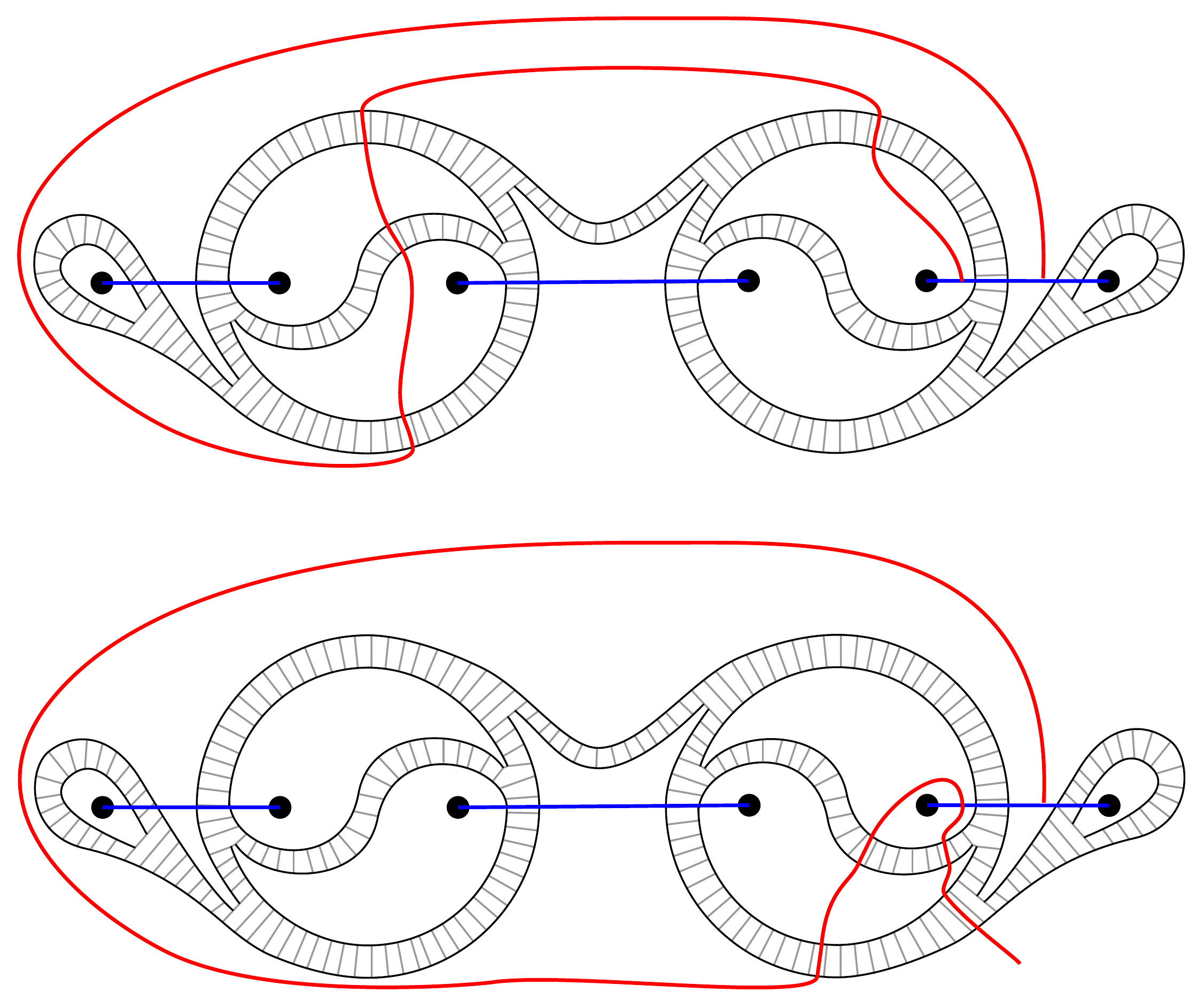}
\caption{Isotoping a transverse arc to a carried arc.}
\label{ttwaves}
\end{figure}

As suggested by  Figure ~\ref{ttwaves}, since $\gamma$ is disjoint from all $\alpha_i$'s, it can intersect the tao arc of this tao at most twice. Since $\partial D$ is a simple loop, we conclude that every arc of intersection between $\partial D$ and the tao disk $E_i$ intersects that tao arc at most twice. The reader can check that, as indicated in Figure~\ref{diskspin}, the image of any such arc under three or more half twists will be carried by this tao and cover it. Thus we can make the image $\pi_i(\partial D)$ transverse to $\tau_{n - 1}$ so that it covers at least one tao.
\end{proof}

\begin{figure}[ht]
\includegraphics[width= 3.5in]{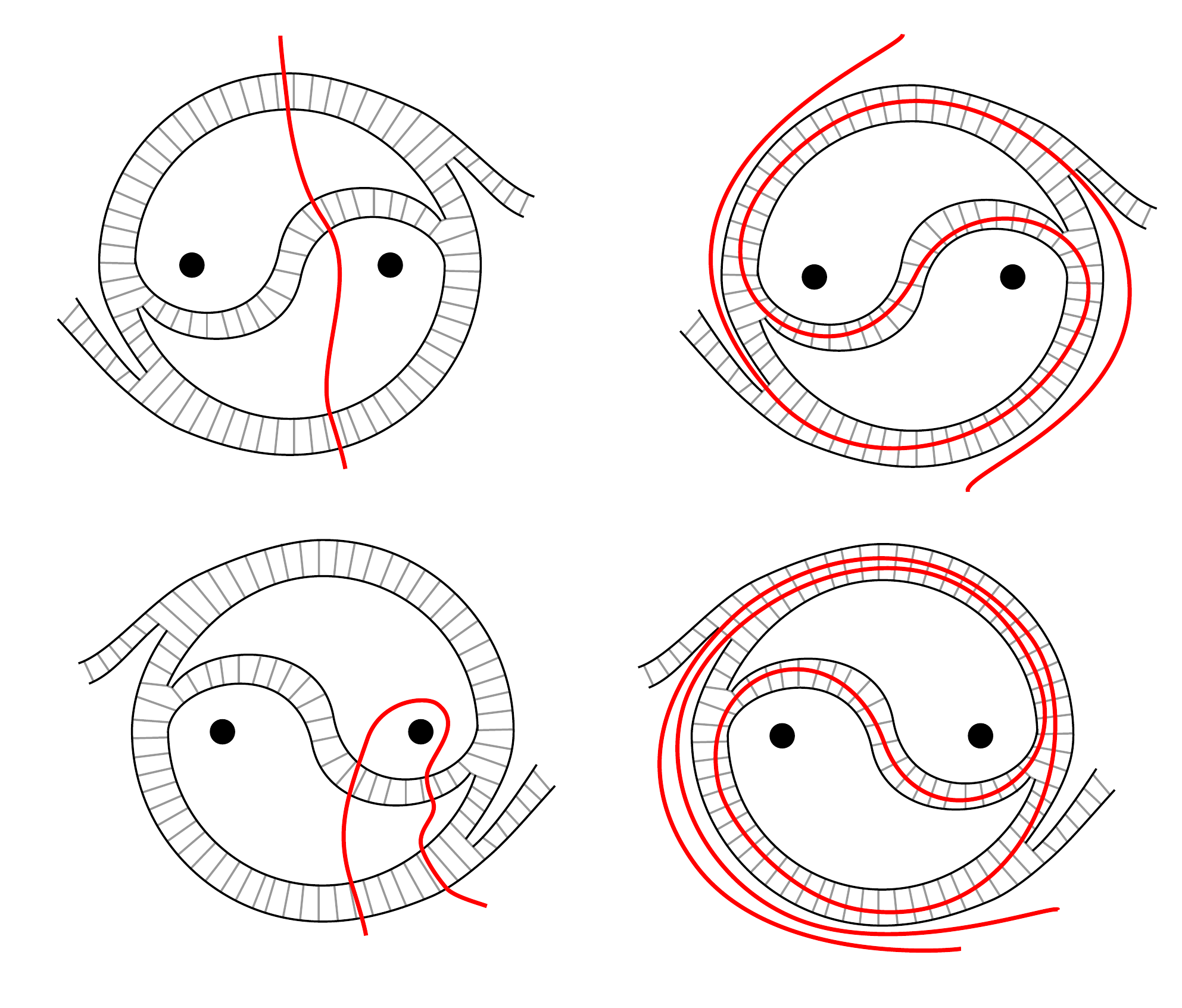}
\caption{Isotoping a transverse arc to a carried arc.}
\label{diskspin}
\end{figure}

\begin{lemma}
\label{diskcoversi}
Let $D \subset \mathcal{B}^+ \ssm K$ be a properly embedded, essential disk above the bridge surface $P_n$ and assume $n \geq 2(m - 2)$. Then for 
each  $k \leq 2(m - 2)$, the loop $\pi_{n-k}(\partial D)$ is isotopic to a loop that is transverse to  $\tau_{n - k}$  and covers 
$t$ taos of $\tau_{n - k}$ for some $t \geq ( k+1 )/2$. When $n - k$ is even and  $k > 1$,  $\ell$ covers $t$ taos and one eyelet.
\end{lemma}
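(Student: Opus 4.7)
The plan is to proceed by induction on $k$. The base case $k = 1$ is precisely Lemma~\ref{diskcoversone}, which supplies a loop $\pi_{n-1}(\partial D)$ transverse to $\tau_{n-1}$ and covering at least one tao. Since $(k+1)/2 = 1$ when $k = 1$, the bound on $t$ holds, and the parity/eyelet clause is not asserted because it requires $k > 1$.

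For the inductive step, set $\ell := \pi_{n-k}(\partial D)$ and assume it has been put in position transverse to $\tau_{n-k}$ with carried subarcs covering $t \geq (k+1)/2$ taos (and an eyelet in the stated parity case). First I would invoke the isotopy shown in Figure~\ref{trvtocarried} to replace any arc of $\ell$ that is parallel to an interval fiber by a carried arc wherever the third clause of Definition~\ref{def:transverse} allows it, so that the covered tao arcs of $\tau_{n-k}$ are genuinely crossed by carried subarcs. Then I would run the twist analysis underlying the proof of Lemma~\ref{up the ladder}: each carried subarc of $\ell$ crossing a tao arc of $\tau_{n-k}$ is pushed, under the plat homeomorphism $P_{n-k} \to P_{n-k-1}$ followed by $\geq 3$ half twists in row $n-k-1$, to an arc in $\tau_{n-k-1}$ that covers either two consecutive taos or one tao together with an adjacent eyelet, exactly as in Figure~\ref{taospinarcs}. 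After taking into account the overlaps between adjacent covered tao arcs exactly as in the original proof of Lemma~\ref{up the ladder}, and isotoping the result via Lemma~\ref{lem:maketransverse}, the loop $\pi_{n-k-1}(\partial D)$ is transverse to $\tau_{n-k-1}$ and covers at least $t+1$ taos of $\tau_{n-k-1}$, or $t$ taos and an eyelet when $\tau_{n-k-1}$ admits eyelets.

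The main obstacle is upgrading the per-step conclusion ``$t+1$ taos \emph{or} $t$ taos and an eyelet'' into the two-step bound $t \geq (k+1)/2$, which requires that $t$ strictly increase over every two consecutive induction steps. To close the induction one must carry the eyelet indicator along: whenever $\ell$ at step $k$ covers an eyelet, the two arcs forming that eyelet sit at the outer edge of the plat, and under the plat homeomorphism $P_{n-k} \to P_{n-k-1}$ they land inside the outermost tao of $\tau_{n-k-1}$, since the opposite parity of $n-k-1$ replaces the eyelet position by a genuine tao. Consequently, an eyelet covered at step $k$ automatically cashes out as an additional covered tao at step $k+1$. Combining this ``eyelet cash-in'' with the per-step dichotomy, a short case analysis on the parity of $n-k$ and on the presence or absence of an eyelet at step $k$ produces an increase of at least one in $t$ over any two consecutive steps, thus propagating the bound $(k+1)/2$ forward and also yielding the parity clause. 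The remaining technical wrinkle is verifying that the fiber-parallel arcs in the transverse loop $\ell$ do not disrupt the twist analysis applied to its carried arcs; this follows from clauses (2) and (3) of Definition~\ref{def:transverse} together with Corollary~\ref{projection coro}.
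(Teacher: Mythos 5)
Your proposal follows essentially the same route as the paper's proof: the base case is Lemma~\ref{diskcoversone}, and the inductive step iterates the ``two taos or one tao plus an eyelet'' dichotomy coming from the twist analysis behind Lemmas~\ref{carried and covers}, \ref{LLCarried} and \ref{up the ladder}, with the eyelet contribution controlled by the parity of the rows. Your ``eyelet cash-in'' step is just a more explicit version of the paper's remark that the eyelet case can occur at most every other step, so the two arguments agree in substance.
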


\begin{proof}By the assumption above (as in Remark ~\ref{n- bounds}) $n \geq 2(m - 2)$ so the terms $\pi_{n-k}$ and $\tau_{n - k}$ are well defined. By Lemma~\ref{diskcoversone}, $\pi_{n-1}(\partial D)$ covers at least one tao $Y_{n-1,j}$ of $\tau_{n-1}$. By Lemma~\ref{LLCarried}, $\pi_{n - 2}(Y_{n - 1,j})$ covers either two taos of $\tau_{n - 2}$ or one tao and one eyelet. (The latter occurs only when $n - 2$ is even.) Therefore, for each arc $\gamma$ of $\partial D$ such that $\pi_{n-1}(\gamma)$ covers $Y_{n-1,j}$, we conclude that $\pi_{n - 2}(\gamma)$ covers either two taos of $\tau_{n-2}$ or one tao and one eyelet. If we repeat this argument, noting that the latter case can happen at most half of the time, we conclude that $\pi_{n-k}(\partial D)$ covers at least $k/2$ taos of $\tau_{n-k}$ or $k/2-1$ taos and one eyelet.
\end{proof}

\begin{corollary}
\label{< r almost carried}
Let $D$ be a properly embedded, essential disk above the bridge surface $P_n$ and $r$ a positive integer so that  $t = n - 2r(m - 2) \geq 1 $ for $n$ odd and $t = n - 2r(m - 2)+1 \geq 1$ for $n$ even. Then for every loop $\ell \subset \Sigma$ such that $d(\partial D, \ell) < r$, $\pi_t(\ell)$ is almost carried by $\tau_t$.
\end{corollary}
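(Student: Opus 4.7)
The natural plan is to proceed by induction on $r$, strengthening the inductive hypothesis to also record that $\pi_{t_r}(\ell)$ \emph{covers} $\tau_{t_r}$ (in the sense appropriate for plat tracks; recall that for odd rows this means covering the diagram obtained by deleting one of the two eyelets from $T_{t_r}$). The coverage condition is what allows us to feed the output of one step into Lemma~\ref{coversdisjointcarried} at the next step, so it must be carried along throughout the induction even though the statement of the corollary only asks for ``almost carried.''

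For the base case $r=1$ we have $\ell=\partial D$. We apply Lemma~\ref{diskcoversi} with $k = n-t$, which by the parity-dependent choice of $t$ is exactly $2(m-2)$ (or $2(m-2)-1$ for $n$ even), making $t$ odd. The bound $t'\geq (k+1)/2$ on the number of bisected taos, combined with the bonus eyelet provided by the last sentence of Lemma~\ref{diskcoversi}, is exactly enough to bisect all $m-1$ taos of $\tau_t$ plus one of its eyelets. This accounts for the entire plat-track subgraph one is required to cover in the amended definition for odd rows. The loop produced by Lemma~\ref{diskcoversi} is only guaranteed to be transverse to $\tau_t$, so a final small isotopy (of the type illustrated in Figure~\ref{trvtocarried}) is needed to push any arc lying in an interval fiber out along a carried arc running across that fiber; such carried arcs exist precisely because we have full coverage.

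For the inductive step, given $\ell$ with $d(\partial D,\ell)<r+1$ pick an adjacent vertex $\ell^*$ with $d(\partial D,\ell^*)<r$ and $\ell^*\cap\ell=\emptyset$. By the inductive hypothesis, $\pi_{t_r}(\ell^*)$ is almost carried by $\tau_{t_r}$ and covers it. Because $\pi_{t_r}$ is (up to isotopy) a homeomorphism of $\Sigma$, the image $\pi_{t_r}(\ell)$ is disjoint from $\pi_{t_r}(\ell^*)$, so Lemma~\ref{coversdisjointcarried} makes $\pi_{t_r}(\ell)$ almost carried by $\tau_{t_r}$. Iterating Corollary~\ref{projection coro} through the $2(m-2)$ rows separating $t_r$ from $t_{r+1}$ gives that $\pi_{t_{r+1}}(\ell)$ is almost carried by $\tau_{t_{r+1}}$, which is the conclusion in the statement of the corollary. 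To restore the strengthened hypothesis for the next round, apply Lemma~\ref{hitsonetao} to $\pi_{t_r}(\ell)$ to get at least one bisected tao, then iterate Lemma~\ref{up the ladder} through the same $2(m-2)$ projection steps; each step gains either one tao or one eyelet, which together exceed the number of items required to cover $\tau_{t_{r+1}}$ (again in the amended sense for odd rows).

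The principal obstacle I expect is the careful bookkeeping in the coverage part of the inductive step: Lemma~\ref{up the ladder} lets one gain an eyelet only when the target row is odd, and eyelet capacity per row is bounded, so the accounting of ``taos gained versus eyelets gained'' is parity sensitive (exactly the reason $t$ is shifted by one when $n$ is even so that $t_r$ is always odd). A secondary but routine check is the passage from ``transverse and covers'' to ``almost carried'' in the base case, since Lemma~\ref{diskcoversi} delivers only transversality; the isotopy upgrading this uses coverage in an essential way, so verifying that the coverage count really does suffice for every parity of $n$ and every $m\geq 3$ is the most error-prone computation in the argument.
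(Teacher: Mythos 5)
Your proposal is correct and follows essentially the same route as the paper: induction on $r$, with the base case handled by Lemma~\ref{diskcoversi} (plus the upgrade from transverse to almost carried along interval fibers), and the inductive step by Lemma~\ref{hitsonetao}, repeated application of Lemma~\ref{up the ladder}, and Lemma~\ref{coversdisjointcarried} applied to a disjoint predecessor in the path. The only difference is organizational: you carry the coverage statement in a strengthened inductive hypothesis and apply Lemma~\ref{coversdisjointcarried} at level $t_r$ before projecting down via Corollary~\ref{projection coro}, whereas the paper keeps the hypothesis as ``almost carried'' and re-derives the predecessor's coverage at level $t$ (via Lemmas~\ref{hitsonetao} and~\ref{up the ladder}) before applying Lemma~\ref{coversdisjointcarried} there --- the same lemmas and the same parity counting either way.
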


\begin{proof}
Since $t \geq 1$ the restrictions on $r$ are necessary. We will prove this by induction on $r$. Consider the base case $r = 1$:  Since $d(\partial D, \ell) = 0$ the loop $\ell $ is isotopic to $\partial D$.  In this case  $t = n - 2(m - 2)$ when $n$ is  odd and $t = n - 2(m - 2) + 1$ when $n$ is even. In either case, $t$ is be odd, so the train track  $\tau_{n - 2(m - 2)}$ has exactly $m - 1$ taos. By Lemma~\ref{diskcoversi} applied to the case $k = 2(m - 2)$, we have that $t \geq m - 1$ so  $\pi_t(\ell)$ covers all the $m - 1$ taos and one eyelet of $\tau_t$. Since $\pi_t(\partial D)$ is transverse to $\tau_t$, by Lemma~\ref{diskcoversi} all the non-carried arcs of $\pi_t(\partial D) \cap \tau_t$ must be along interval fibers. But since $\pi_t(\partial D)$ is simple, this implies that $\pi_t(\partial D)$ is almost carried by $\tau_t$, completing the proof of the base case.

For the inductive step, assume $d(\partial D, \ell) < r$, i.e. that there is a path in $\Sigma$ from $\partial D$ to $\ell$ of length strictly less than $r$. Then the vertex right before $\ell$ in this path will represent a loop $\ell'$ such that $d(\partial D, \ell') < r - 1$. By the inductive hypothesis, this means that for $t' = n - 2(r - 1)(m - 2)$ for $n$ odd and $t' = n - 2(r - 1)(m-2)+1$ for $n$ even, the loop $\pi_{t'}(\ell')$ is almost carried by $\tau_{t'}$. Note that regardless of the parity of $n$, both $t$ and $t'$ are odd and $t' - t = 2(m-2)$.

Because $\pi_{t'}(\ell')$ is almost carried by $\tau_{t'}$, it must bisect at least one tao arc by Lemma~\ref{hitsonetao}. By repeatedly applying Lemma~\ref{up the ladder}, we find that $\pi_t(\ell')$ covers $1 + \frac{1}{2}(t' - t) = 1 + m-2 = m-1$ taos and one eyelet of $\tau_t$. Since $\pi_t(\ell)$ is disjoint from $\pi_t(\ell')$, Lemma~\ref{coversdisjointcarried} implies that $\pi_t(\ell)$ is also almost carried by $\tau_t$. This completes the induction step and thus the proof.
\end{proof}

The final step in the proof that the constant given by Theorem~\ref{main theorem} is in fact a lower bound is the following Lemma:

\begin{lemma}
\label{lower disk}
If $D \subset \mathcal{B}^- $ is a properly embedded, essential disk below $P_1$ then $\partial D$ is not almost carried by $\tau_1$.
\end{lemma}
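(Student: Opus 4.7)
My plan is to argue by contradiction: assume $\partial D = \ell$ is almost carried by $\tau_1$, and reach a contradiction via a mod-$2$ parity argument on how the punctures are distributed on the two sides of $\ell$ in $\Sigma$.

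First, I translate the hypothesis that $D$ is a compressing disk in $\mathcal{B}^- \ssm K$ into an algebraic constraint. The space $\mathcal{B}^- \ssm K$ is a genus-$m$ handlebody whose meridian system is given by the disks bounded by the Hempel loops $\ell_{0,j}$, each enclosing exactly the bridging pair $\{p_{2j-1}, p_{2j}\}$. Hence the class $[\ell] \in H_1(\Sigma \ssm K; \mathbb{Z}/2)$ must lie in the subspace spanned by $\{[\mu_{2j-1}] + [\mu_{2j}]\}_{j=1}^{m}$; equivalently, both punctures of every bridging pair lie on the same side of $\ell$ in $\Sigma$.

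I then plan to use the structure of $\tau_1$ to show this is impossible. The key observation is that the taos $Y_{1,j}$ enclose the pairs $\{p_{2j}, p_{2j+1}\}$ and the two eyelets enclose $p_1$ and $p_{2m}$; the tao/eyelet partition is offset by one from the bridging partition, so each bridging pair places its two elements in distinct tao/eyelet components. For each complementary region of $\tau_1$, the arcs of $\ell$ in it (whose endpoints lie at switches, by the almost-carried hypothesis) subdivide the region into sub-regions, each of which receives a sign indicating its side of $\ell$. These signs propagate unchanged across any arc of $\partial \tau_1$ that $\ell$ does not cover, so each puncture's side is determined combinatorially by the arcs of $\ell$ in the specific tao or eyelet containing it.

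Using the switch conditions at the trivalent vertices of $\tau_1$, the handedness of the tao arcs (which restricts how a carried arc of $\ell$ may enter and leave each tao), and the essential-arc requirement built into the definition of ``almost carried,'' I will track the evolution of these signs across the train track and conclude that some bridging pair $\{p_{2j-1}, p_{2j}\}$ must straddle $\ell$, contradicting the algebraic condition above. The main obstacle I anticipate is the case analysis at the transitions between adjacent tao/eyelet components through the outer region, where the essential-arc condition imposes compatibility constraints between the signs on the two sides of the connecting band; the incompatibility between the tao pairing and the bridging pairing (shifted by one) is precisely what prevents these constraints from simultaneously preserving every bridging pair, yielding the contradiction.
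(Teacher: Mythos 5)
Your reduction to the mod-$2$ condition is fine as far as it goes (if $\partial D$ bounds in $\mathcal{B}^-\ssm K$ then no bottom bridge pair is split by it), but the combinatorial claim you then set out to prove --- that \emph{every} essential loop almost carried by $\tau_1$ must split some bottom bridge pair --- is false, so the contradiction you are aiming for cannot be reached. Concretely, take a highly twisted plat with $a_{1,1}$ even (say $a_{1,1}=4$). By Lemma~\ref{carried and covers}, the loop $\pi_1(\ell_{2,1})$ is carried by $\tau_1$; it is essential (it encloses exactly two of the $2m\geq 6$ punctures), and since the strand at position $1$ misses the row-one twist regions while the strand at position $2$ returns to position $2$ after an even number of half twists, $\pi_1(\ell_{2,1})$ encloses precisely the punctures $\{p_1,p_2\}$ of the first bottom bridge. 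Thus it splits no bridge pair, and in fact it induces exactly the same partition of the punctures as $\ell_{0,1}=\partial D^-_1$, which genuinely bounds below $P_1$. This shows more than a gap in your case analysis: no argument that records only which punctures lie on which side of the loop (i.e.\ only the class in $H_1(\Sigma\ssm K;\Z/2)$, which is all your sign-propagation scheme sees) can distinguish a carried loop such as $\pi_1(\ell_{2,1})$ from a loop that bounds a compressing disk below, so the statement is not provable along these lines. The loop $\pi_1(\ell_{2,1})$ fails to bound below for a finer, non-homological reason (it is a Dehn-twisted copy of $\ell_{0,1}$ about $\ell_{1,1}$), and that is the phenomenon the lemma must detect.

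For comparison, the paper's proof is geometric rather than homological: it intersects $D$ with the vertical disks below $P_1$ cut off by the shadows $\alpha_i$ of the bottom bridges, removes circles by irreducibility, and takes an outermost arc of intersection in $D$ to produce a ``wave'' subarc $\gamma\subset\partial D$ that is disjoint from the $\alpha_i$ away from its endpoints yet must cross the line $z=0$, hence cross a tao. The key observation is that the arcs $\alpha_i$ block every switch of $\tau_1$ (each switch points into a triangle or quadrilateral bounded by subarcs of the $\alpha_i$ and of $\partial\tau_1$), so $\gamma$ cannot enter or leave $\tau_1$ at a switch; since an almost carried loop meets $\partial\tau_1$ only at switches (Remark~\ref{enter and leave}), $\partial D$ cannot be almost carried. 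If you want to salvage your approach, you would need to import this kind of information about how $\partial D$ meets the bridge shadows --- homology of the complementary handlebody alone is strictly weaker than the conclusion of the lemma.
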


\begin{proof} 
The proof is similar to the first part  of the proof of Lemma~\ref{diskcoversone}. We bring  it here for the sake of completeness. By construction, the link $K$ intersects the half-plane $\{(x,y,z)\ |\ z=0,\ y\leq 1\}$ in $m$ arcs, each of which has its endpoints in the boundary of the half-plane and separates a disk from the half-plane. Denote these disks by  $D_1,\ldots, D_m$ and note that $D_i \cap P_n$ is an arc $\alpha_i = \{(x,y,z)\ |\ z=0,\ y = 1,\ 2i-1 \leq x \leq 2i\}$,  for each $i$.

Let $D$ be a compressing disk below $P_1$ and assume $D$ is transverse to $D_1,\ldots, D_m$ and minimizes the intersections with them. Any loops of intersection between $D$ and each $D_i$ will determine $2$-spheres contained in the handlebody  $\mathcal{B}^- \ssm K$  bounded by $P_1$. Since handlebodies are irreducible, such loops of intersection can be removed. Hence  $D \cap D_i$ is a collection of arcs for each $i$. 

Set $A = D \cap \{\cup D_i\}$ and let $\beta \subset A$ be an outermost arc in $D$ bounding a disk $E \subset D$ so that $\partial E = \beta \cup \gamma$ where $\gamma \subset P_1$. Let $\delta \subset \alpha_i$ be the arc in the appropriate $\partial D_i$ that shares its endpoints with $\beta$.

The arcs $\gamma$ and $\delta$ form a loop in $P_1$. This loop cannot bound a disk with interior disjoint from the arcs $\alpha_i$ as this would define an isotopy that removes the arc $\beta$ from $D \cap \{\cup D_i\}$ contradicting the choice of $D$. Thus, the interior of the disk $E$ bounded by $\gamma \cup \delta$ must intersect one or more of the arcs $\alpha_i$. By construction, the arc $\gamma$ is disjoint from the arcs $\alpha_i$ away from its endpoints and the arc $\delta$ is contained 
in one of the arcs $\alpha_i$. So this implies that $E$ contains one or more of the arcs $\alpha_i$ in its interior.

Since each arc $\alpha_i$ is in the line $z = 0$ in $P_1$ and $\gamma \subset P_1$ is on the same side of this line near both its endpoints, this implies that the arc $\gamma$ must cross the line $z = 0$ at least twice. Moreover, if all the points of intersection are at values of $x$ greater than $2n$ or less than $1$ then $\gamma$ will be a trivial arc (possibly after passing it across the point at infinity, which corresponds to an isotopy in $\Sigma$.) This again contradicts the minimal intersection assumption. Therefore, a subarc of $\gamma$ must cut across one of the taos transversely, as indicated in Figure~\ref{ttwaves}.  However note that the $\alpha_i$'s ``block" the access to each switch. More precisely they contain subarcs which together with arcs emanating from the switch bound either a triangle or a quadrilateral with the switch pointing into its interior. This implies that 
$\gamma$ cannot  enter or leave $\tau_1$ at a switch and therefor cannot be almost carried by it.
\end{proof}

\begin{corollary}
\label{lower bound} 
Given a link $K \subset S^3$ defined by a highly twisted $n$-row, $2m$-plat presentation and the induced bridge surface $\Sigma$, $d(\Sigma) \geq  \lceil n /(2(m - 2)) \rceil$.
\end{corollary}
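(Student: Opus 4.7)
We argue by contradiction. Assume $d(\Sigma) < R := \lceil n/(2(m-2)) \rceil$, fix compressing disks $D^+ \subset \mathcal{B}^+ \ssm K$ and $D^- \subset \mathcal{B}^- \ssm K$ with $d(\partial D^+,\partial D^-) = d(\Sigma) =: d$, and choose a geodesic $\ell_0 = \partial D^+, \ell_1, \ldots, \ell_d = \partial D^-$ in $\mathcal{C}(\Sigma_K)$. The plan is to unroll the induction inside Corollary~\ref{< r almost carried} along this geodesic, one edge at a time, until the last edge forces $\partial D^-$ to be almost carried by $\tau_1$, contradicting Lemma~\ref{lower disk}.

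Concretely, set $t_k := n - 2k(m-2)$ if $n$ is odd and $t_k := n - 2k(m-2) + 1$ if $n$ is even. I would prove by induction on $k = 1, \ldots, d$ that $\pi_{t_k}(\ell_{k-1})$ covers a sub-train-track of $\tau_{t_k}$ consisting of $m - 1$ taos together with one of the two eyelets. The base case $k = 1$ is exactly the base case of Corollary~\ref{< r almost carried}, which in turn rests on Lemma~\ref{diskcoversi} applied with $k = 2(m-2)$. The inductive step from $k$ to $k + 1$ is an exact copy of the inductive step of that corollary: $\ell_k$ is disjoint from $\ell_{k-1}$, so Lemma~\ref{coversdisjointcarried} applied to the sub-train-track produced at stage $k$ gives $\pi_{t_k}(\ell_k)$ almost carried by $\tau_{t_k}$ in the extended sense (dropping one eyelet, which is legitimate since $t_k$ is always odd); Lemma~\ref{hitsonetao} then forces $\pi_{t_k}(\ell_k)$ to bisect at least one tao arc; and a $2(m-2)$-fold iteration of Lemma~\ref{up the ladder} upgrades this to the desired covering of $m - 1$ taos plus one eyelet of $\tau_{t_{k+1}}$. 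A short ceiling-plus-parity calculation verifies that $d < R$ forces $t_d \geq 1$, so the induction survives all the way through $k = d$.

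For the endgame, $\ell_d = \partial D^-$ is disjoint from $\ell_{d-1}$, and by the induction $\pi_{t_d}(\ell_{d-1})$ covers $m - 1$ taos and one eyelet of $\tau_{t_d}$; Lemma~\ref{coversdisjointcarried} therefore gives $\pi_{t_d}(\partial D^-)$ almost carried by $\tau_{t_d}$ in the extended sense. Since the plat is highly twisted at every row, iterated application of Corollary~\ref{projection coro} transports this property down through $\tau_{t_d-1}, \tau_{t_d-2}, \ldots, \tau_1$, yielding $\pi_1(\partial D^-) = \partial D^-$ almost carried by $\tau_1$. This contradicts Lemma~\ref{lower disk} and completes the proof that $d(\Sigma) \geq R$.

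The main delicate point is the arithmetic in the inductive step: lining up the ceiling $R$ with the inequality $t_k \geq 1$ under the case analysis dictated by the parity of $n$ and the residue of $n$ modulo $2(m-2)$, so that each invocation of Lemma~\ref{up the ladder} and Corollary~\ref{projection coro} is legitimate. Combined with careful tracking of the extended "almost carried" convention for odd-indexed plat tracks — needed because the inductive covering data always falls exactly one eyelet short of covering all of $\tau_{t_k}$ — this delivers the contradiction. Beyond this bookkeeping, the argument is a direct reuse of the covering-and-almost-carried machinery developed in Sections~\ref{carrying}--\ref{top and bottom waves}.
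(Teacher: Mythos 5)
Your proposal is correct and follows essentially the same route as the paper: the paper's proof simply invokes Corollary~\ref{< r almost carried} (whose induction along a geodesic you have unrolled verbatim, using Lemma~\ref{diskcoversi} for the base case and Lemmas~\ref{coversdisjointcarried}, \ref{hitsonetao}, \ref{up the ladder} for the steps) and then contradicts Lemma~\ref{lower disk}. Your explicit descent from $\tau_{t_d}$ to $\tau_1$ via iterated use of Corollary~\ref{projection coro} is a welcome clarification of a step the paper leaves implicit, but it is not a different argument.
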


\begin{proof} 
Assume for contradiction $d(\Sigma) < \lceil n /(2(m - 2)) \rceil$. Then there is a loop $\ell \subset P_n$ bounding a disk above $P_n$ and a loop $\ell' \subset P_1$ bounding a disk below $P_1$ such that $d(\pi_1(\ell), \ell') < \lceil n /(2(m - 2)) \rceil$. By Lemma~\ref{< r almost carried}, the distance bound implies that $\ell'$ is almost carried by $\tau_1$. However, Lemma~\ref{lower disk} implies that $\ell'$ cannot be carried by $\tau_1$. This contradiction implies that $d(\Sigma) \geq \lceil n /(2(m - 2))  \rceil$. 
\end{proof}

\begin{proof}[Proof of Theorem \ref{main theorem}] 
Combining the upper bound in Lemma~\ref{upper bounds on d} with the lower bound in Corollary ~\ref{lower bound} gives us the equality $d(\Sigma) = \lceil n /(2(m - 2))  \rceil$.
\end{proof}

\bibliographystyle{amsplain}
\bibliography{plats}

\end{document}